\documentclass[10pt]{amsart}
\usepackage[a4paper,margin=22mm]{geometry}
\usepackage{amsmath,amssymb,amsthm,mathtools,booktabs}
\usepackage[protrusion=true,expansion=false]{microtype}
\usepackage[hidelinks]{hyperref}
\hypersetup{pdftitle={An improved upper bound for recovering pairs},pdfauthor={Simone Costa}}
\newtheorem{theorem}{Theorem}[section]
\newtheorem{lemma}[theorem]{Lemma}
\newtheorem{proposition}[theorem]{Proposition}
\theoremstyle{remark}
\newtheorem{remark}[theorem]{Remark}
\theoremstyle{plain}

\newcommand{\cA}{\mathcal A}
\newcommand{\cB}{\mathcal B}
\newcommand{\cC}{\mathcal C}
\newcommand{\cD}{\mathcal D}
\newcommand{\Prob}{\mathbb P}
\newcommand{\E}{\mathbb E}

\title{An improved upper bound for recovering pairs}
\author{Simone Costa}
\thanks{DICATAM, Universit\`a degli Studi di Brescia, Via Branze 43, 25123 Brescia, Italy. Email: \href{mailto:simone.costa@unibs.it}{simone.costa@unibs.it}.}
\date{}

\begin{document}

\begin{abstract}
We prove that every recovering pair $(\cA,\cB)$ of nonempty families of subsets of an $n$-element set satisfies
\[
 |\cA||\cB|\le\left(\frac{223949}{100000}\right)^n=(2.23949)^n.
\]
A result of Mond, Souza and Versteegen, combined with the sharp $(9/4)^n$ bound of Fang and Huang for cancellative pairs, gives $(2.2499)^n$ for recovering pairs. We improve this to $(2.23949)^n$. The proof combines two entropy inequalities. One is derived directly from the recovering property, while the other applies the Fang--Huang bound to families obtained from the original pair by a construction of Mond, Souza and Versteegen. The two inequalities are then combined by a convex combination.
\end{abstract}
\maketitle

\section{Introduction}\label{sec:intro}
A pair $(\cA,\cB)$ of nonempty families of subsets of $[n]=\{1,\ldots,n\}$ is \emph{recovering} if, for all $A,A'\in\cA$ and $B,B'\in\cB$,
\begin{equation}\label{eq:recovery}
 A\setminus B=A'\setminus B'\Longrightarrow A=A',\qquad
 B\setminus A=B'\setminus A'\Longrightarrow B=B'.
\end{equation}
The sandglass conjecture of Ahlswede and Simonyi~\cite{AS} asserts that
$|\cA||\cB|\le 2^n$. Equality is attained by taking the power sets of two complementary parts of $[n]$.

A pair $(\cA,\cB)$ is \emph{cancellative} if, for all $A,A'\in\cA$ and $B,B'\in\cB$,
\begin{equation}\label{eq:cancellative}
 A\setminus B=A'\setminus B\Longrightarrow A=A',\qquad
 B\setminus A=B'\setminus A\Longrightarrow B=B'.
\end{equation}
Thus every recovering pair is cancellative. Holzman and K\"orner~\cite{HK} proved an upper bound of approximately $2.3264^n$ for cancellative pairs. Solt\'esz~\cite{Soltesz} obtained $2.284^n$ for recovering pairs; Janzer~\cite{Janzer} obtained $2.2682^n$ for cancellative pairs; and Nair and Yazdanpanah~\cite{NY} obtained $2.2663^n$ for recovering pairs. Mond, Souza and Versteegen~\cite{MSV} subsequently proved the recovering-pair bound $2.2543^n$. Recovering pairs are also equivalent to zero-error code pairs for the deterministic AND--OR interference channel; see~\cite{NY}.

The distinction between recovering and cancellative pairs is essential for the present argument. Tolhuizen~\cite{Tolhuizen} constructed cancellative pairs with product $(2.25-o(1))^n$. Fang and Huang~\cite{FH} have proved the matching finite upper bound $(9/4)^n$ for every cancellative pair. Combined with the more general estimate of Mond, Souza and Versteegen~\cite[Theorem~1.3]{MSV}, this gives the recovering-pair bound $2.2499^n$. Chang~\cite{Chang} subsequently gave an independent hypercontractive proof of the cancellative-pair bound. Hence a universal bound below $(9/4)^n$ for recovering pairs must use information not available from cancellativity alone.

\begin{theorem}\label{thm:main}
Let $(\cA,\cB)$ be a recovering pair of nonempty families of subsets of an $n$-element set. Then
\[
 |\cA||\cB|\le\left(\frac{223949}{100000}\right)^n=(2.23949)^n.
\]
\end{theorem}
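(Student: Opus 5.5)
We will argue entropically. Choose $A\in\cA$ and $B\in\cB$ independently and uniformly, and regard them as random vectors $X=(X_1,\dots,X_n)$ and $Y=(Y_1,\dots,Y_n)\in\{0,1\}^n$. Then
\[
 \log_2(|\cA||\cB|)=H(X)+H(Y).
\]
A tensor-power (or averaging over coordinates) argument lets us assume the pair is invariant under a transitive group acting on the coordinates, or else work with averaged per-coordinate quantities. The task then reduces to a single-letter optimisation. We parametrise each coordinate $i$ by the joint law of $(X_i,Y_i)$, say $p_i=\Prob(X_i=1)$ and $q_i=\Prob(Y_i=1)$ with $X_i,Y_i$ independent, and by conditional entropies of the form $H(X_i\mid X_{<i},\ \cdot)$.

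The first inequality comes straight from the recovering property. The pair $(X\setminus Y,\ Y\setminus X)$, together with $Y$, determines $X$; likewise it determines $Y$ together with $X$. More strongly, $X\setminus Y$ alone determines $X$, and $Y\setminus X$ alone determines $Y$. Hence
\[
 H(X)=H(X\setminus Y)\le\sum_i H(X_i\bar Y_i\mid \text{past}),
\]
and similarly for $Y$. We will use the chain rule to write
\[
 H(X)=\sum_i H(X_i\mid X_{<i}),
\]
and bound each term $H(X_i\wedge\neg Y_i\mid \ldots)$ by the binary entropy of $p_i(1-q_i)$, after conditioning on $Y_{<i}$ so that independence is preserved. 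This gives an inequality of the form
\[
 H(X)+H(Y)\le\sum_i F(p_i,q_i,h_i,k_i),
\]
where $h_i,k_i$ are the conditional entropies of the coordinates. The point is that recovery, unlike cancellation, uses $X\setminus Y$ without knowing $Y$. That is the extra information the introduction said must be exploited.

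The second inequality uses the Mond--Souza--Versteegen construction~\cite[Theorem~1.3]{MSV}. From $(\cA,\cB)$ one builds families on a modified ground set, for instance by restricting or fixing coordinates according to the conditional distributions, so that the new pair is cancellative. Its sizes are controlled by the same entropy parameters $p_i,q_i,h_i,k_i$. Applying the Fang--Huang bound $(9/4)^{n'}$~\cite{FH} to this new pair yields a second linear inequality
\[
 H(X)+H(Y)\le\sum_i G(p_i,q_i,h_i,k_i).
\]
Taking a convex combination $\lambda F+(1-\lambda)G$ then gives
\[
 \frac1n\log_2(|\cA||\cB|)\le \max_{p,q,h,k}\bigl(\lambda F+(1-\lambda)G\bigr).
\]
We will choose $\lambda$ to minimise this maximum, and verify that the result is at most $\log_2(2.23949)$.

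The main obstacle is this final step: a rigorous certificate that the maximum over the (compact, low-dimensional) parameter region of the combined function is below $\log_2 2.23949$. The constant $223949/100000$ signals a numerically optimised bound. I would first reduce by symmetry ($p\leftrightarrow q$) and concavity of the entropy terms to a one- or two-variable problem, locate the maximiser numerically, and then certify it on a fine grid using Lipschitz bounds on $F$ and $G$. A secondary difficulty is checking that the MSV construction's parameters match those appearing in the recovering inequality, so that the combination is taken pointwise in the same variables.
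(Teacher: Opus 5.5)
\textbf{There are genuine gaps.} Your architecture is right: an entropy inequality from recovery, one from the Mond--Souza--Versteegen construction plus Fang--Huang, a convex combination, and a certified pointwise maximum. But the two inequalities that carry all the content are never actually produced, and the first rests on a false step. Write $a_i,b_i$ for your $p_i,q_i$, and put $Z=X\setminus Y$.

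Since $Z$ depends on $Y$, recovery gives $H(Z)=H(X,Z)=H(X)+H(Z\mid X)=H(X)+\sum_S\Prob(X=S)H(Y_S)$, not $H(X)=H(Z)$. The term you discard is exactly what couples $H(X)$ to $H(Y)$, and it drives the paper's bound:
\begin{itemize}
\item an ordered chain rule bounds it below by $\sum_i a_ic_i^B$;
\item the split $a_i=\lambda+(a_i-\lambda)_+-(\lambda-a_i)_+$ extracts $\lambda H(Y)$;
\item the leftover conditional entropies $c_i^B$ are eliminated by ordering coordinates by decreasing $(a_i-\lambda)_+$, so every level set $J_u$ is an initial segment, and integrating $\operatorname{TC}(Z)+H(Y_J)\ge\sum_{i\in J}g(a_i,b_i)$ over $u$;
\item the bound $H(Y_J\mid Z_J)\ge[\log_2T-\sum_k h(a_k(1-b_k))-\sum_{i\notin J}(1-a_i)h(b_i)]_+$ then upgrades $\widehat V_\lambda$ to $W_{\lambda,\tau}$, which is what the certificate uses.
\end{itemize}
Your $F(p_i,q_i,h_i,k_i)$ keeps the conditional entropies $h_i,k_i$ as variables. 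They are only globally constrained (e.g.\ $\sum_ih_i=H(X)$), so maximizing over them coordinate by coordinate is not a valid reduction. Eliminating them is the missing idea.

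The second inequality is likewise a placeholder. Used as a black box, \cite[Theorem~1.3]{MSV} gives only the global bound $2.2499^n$, not a per-coordinate inequality you can mix pointwise with the first. You have to work atom by atom:
\begin{itemize}
\item for $\Prob(Z=S)>0$, let $C$ be the member of $\cA$ determined by $S$; then $\Prob(Z=S)=|\cB_{C,S}|/T$;
\item Fang--Huang on $[n]\setminus C$, together with $|\cA_{C,S,U}|\ge|\cA|2^{-\sum_{i\in S}h(a_i)}$, bounds $-\log_2\Prob(Z=S)$;
\item averaging over $Z$, using $H(Z)\le\sum_ih(a_i(1-b_i))$, and symmetrizing gives $3\log_2T\le\sum_i[F(a_i,b_i)+L(2-a_i-b_i)]$, which depends only on the marginals.
\end{itemize}

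You also omit the minimal-counterexample reduction. Fixing $X_i=s$, $Y_i=t$ and deleting coordinate $i$ preserves recovery, which forces $\max\{a_i,1-a_i\}\max\{b_i,1-b_i\}<\rho^{-1}$, i.e.\ $(a_i,b_i)\in\mathcal M_\rho$. This constraint is active: three of the four support points (up to symmetry) of the paper's obstruction measure lie within $10^{-7}$ of the boundary of $\mathcal M_{2.2394}$. So maximizing over the whole parameter square, as you propose, should not be expected to reach $2.23949$. The constraint also keeps every marginal in $[1-2/\rho,\,2/\rho]$, away from the unbounded slope of $h$ at $0$ and $1$ that would defeat a Lipschitz grid near the edges. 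Finally, your ``transitive group'' symmetrization is not available: products of permuted copies stay recovering but keep distinct coordinate orbits with different marginals. It is also unnecessary, since pointwise bounds on $\sum_i\Phi(a_i,b_i)$ suffice.
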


The proof combines two upper bounds for $\log_2(|\cA||\cB|)$. The first is derived directly from the recovering property. The second applies Theorem~\ref{thm:FH} to the families in Proposition~\ref{prop:MSV}. We take a convex combination of the two bounds and choose its weights and parameters jointly, following the same general principle as in~\cite[Section~5]{DFCDhashing}.

Section~\ref{sec:prelim} collects notation and the entropy identities used in the proofs. Section~\ref{sec:bounds} proves the bounds, and Section~\ref{sec:proof} combines them to prove Theorem~\ref{thm:main}. Appendix~\ref{app:computer} reduces Proposition~\ref{prop:pointwise} and Proposition~\ref{prop:obstruction} to finite rational checks. The computational supplement describes these checks, while the ancillary files provide the exact verifiers, certificates, and outputs. Both are available at Zenodo, \href{https://doi.org/10.5281/zenodo.23093260}{doi:10.5281/zenodo.23093260}.

\section{Notation and entropy facts}\label{sec:prelim}
This section fixes the notation and states the entropy identities used repeatedly in Sections~\ref{sec:bounds} and~\ref{sec:proof}. All random variables below take values in finite sets, and all logarithms are base $2$. For a random variable $X$ with mass function $p_X$ we write
\[
 H(X)=-\sum_x p_X(x)\log_2 p_X(x),
 \qquad 0\log_2 0:=0.
\]
For random variables $X,Y$ we write
\[
 H(X\mid Y)=\sum_y\Prob(Y=y)H(X\mid Y=y),
 \qquad
 I(X;Y)=H(X)-H(X\mid Y).
\]
We use $\Prob(E)$ for the probability of an event $E$ and $\E[f(X)]=\sum_x f(x)p_X(x)$ for expectation. The entropy identities used below are
\begin{align}
 H(X,Y)&=H(X)+H(Y\mid X),\label{eq:chain-two}\\
 H(X_1,\ldots,X_m)&=\sum_{k=1}^m H(X_k\mid X_1,\ldots,X_{k-1}),\notag\\
 H(X\mid Y,Z)&\le H(X\mid Y),\label{eq:conditioning}\\
 H(X\mid Y)&\ge0,\notag\\
 I(X;Y)&\ge0,\notag\\
 H(X_1,\ldots,X_m)&\le\sum_{k=1}^m H(X_k).\label{eq:subadditivity}
\end{align}
In particular, fix an order $\pi$ of $[n]$ and set
\[
 c_{\pi(k)}^X=H(X_{\pi(k)}\mid X_{\pi(1)},\ldots,X_{\pi(k-1)})
 \qquad(1\le k\le n).
\]
The chain rule gives $\sum_{i=1}^n c_i^X=H(X)$, and \eqref{eq:conditioning} gives $c_i^X\le H(X_i)$. For a subset $S\subseteq[n]$, write its elements in the order induced by $\pi$ as $\pi(k_1),\ldots,\pi(k_r)$ with $k_1<\cdots<k_r$. Then
\[
 H(X_S)=\sum_{j=1}^r H\bigl(X_{\pi(k_j)}\mid X_{\pi(k_1)},\ldots,X_{\pi(k_{j-1})}\bigr)
 \ge \sum_{j=1}^r c_{\pi(k_j)}^X,
\]
because $c_{\pi(k_j)}^X$ is defined by conditioning on all coordinates preceding $\pi(k_j)$ in the order $\pi$. Thus
\begin{equation}\label{eq:projection-chain}
 \sum_{i=1}^n c_i^X=H(X),
 \qquad c_i^X\le H(X_i),
 \qquad H(X_S)\ge\sum_{i\in S}c_i^X
 \quad(S\subseteq[n]).
\end{equation}
If $S$ is an initial segment of the order $\pi$, the two conditioning sets in the chain-rule expansion above coincide, so the last inequality in \eqref{eq:projection-chain} is an equality.

For $t\in[0,1]$, let
\[
 h(t)=-t\log_2 t-(1-t)\log_2(1-t),\qquad h(0)=h(1)=0.
\]
Thus a Bernoulli random variable of mean $t$ has entropy $h(t)$. The function $h$ is continuous on $[0,1]$ and concave. We also write $v_+=\max\{v,0\}$.

If $X$ is a random subset of $[n]$, write $X_i=\mathbf 1_{\{i\in X\}}$. For $J\subseteq[n]$, put $J^c=[n]\setminus J$ and write $X_J=X\cap J$ and $X_{J^c}=X\cap J^c$; when entropy is applied, these sets are identified with their indicator vectors. The total correlation of $X_J$ is
\begin{equation}\label{eq:TC}
 \operatorname{TC}(X_J)=\sum_{i\in J}H(X_i)-H(X_J),\qquad
 \operatorname{TC}(X_\varnothing)=0.
\end{equation}
It is nonnegative by \eqref{eq:subadditivity}. We abbreviate $\operatorname{TC}(X)=\operatorname{TC}(X_{[n]})$. For every $J\subseteq[n]$, expanding \eqref{eq:TC} and using \eqref{eq:chain-two} gives
\begin{equation}\label{eq:TC-split}
 \operatorname{TC}(X)=\operatorname{TC}(X_J)+\operatorname{TC}(X_{J^c})+I(X_J;X_{J^c}).
\end{equation}

For a recovering pair $(\cA,\cB)$, unless stated otherwise, $A$ and $B$ denote independent random sets uniformly distributed on $\cA$ and $\cB$, respectively. We use the notation
\begin{equation}\label{eq:basic-notation}
 T=|\cA||\cB|,\qquad
 a_i=\Prob(i\in A),\qquad b_i=\Prob(i\in B),\qquad
 p_i=a_i(1-b_i),\qquad q_i=b_i(1-a_i),
\end{equation}
and write $Z=A\setminus B$ and $W=B\setminus A$. Uniformity and independence give
\[
 H(A)=\log_2|\cA|,\qquad H(B)=\log_2|\cB|,\qquad
 H(A,B)=H(A)+H(B)=\log_2T.
\]
Moreover, $\Prob(Z_i=1)=p_i$ and $\Prob(W_i=1)=q_i$, so
\[
 H(Z_i)=h(p_i),\qquad H(W_i)=h(q_i).
\]

\section{Two estimates for recovering pairs}\label{sec:bounds}
Subsection~\ref{subsec:recovering-entropy} derives an entropy bound from the recovering property. Subsection~\ref{subsec:cancellative-entropy} derives a second bound from Proposition~\ref{prop:MSV} and Theorem~\ref{thm:FH}.

\subsection{An entropy estimate}\label{subsec:recovering-entropy}
For $x,y\in[0,1]$, put $p=x(1-y)$ and $q=y(1-x)$, and define
\begin{equation}\label{eq:g}
 g(x,y)=h\bigl(x(1-y)\bigr)-(1-y)h(x).
\end{equation}
If $X$ and $Y$ are independent Bernoulli random variables with respective means $x$ and $y$, then $X(1-Y)$ is Bernoulli with mean $x(1-y)$, while
\[
 H(X(1-Y)\mid Y)=(1-y)H(X)= (1-y)h(x).
\]
Consequently
\[
 I(X(1-Y);Y)=h\bigl(x(1-y)\bigr)-(1-y)h(x)=g(x,y)\ge0.
\]

\begin{lemma}\label{lem:projection}
Let $A$ and $B$ be independent random subsets of $[n]$, set
$a_i=\Prob(i\in A)$, $b_i=\Prob(i\in B)$, and $Z=A\setminus B$.
Then, for every $J\subseteq[n]$,
\begin{equation}\label{eq:projection}
 \operatorname{TC}(Z)+H(B_J)\ge\sum_{i\in J}g(a_i,b_i).
\end{equation}
Moreover, let $t_1,\ldots,t_n\in[0,1]$, choose a permutation $\pi$ of $[n]$ such that
$t_{\pi(1)}\ge\cdots\ge t_{\pi(n)}$, and define
\[
 c^B_{\pi(k)}=H\bigl(B_{\pi(k)}\mid B_{\pi(1)},\ldots,B_{\pi(k-1)}\bigr)
 \qquad(1\le k\le n).
\]
Then
\begin{equation}\label{eq:weighted-proj}
 \operatorname{TC}(Z)+\sum_{i=1}^n t_i c_i^B
 \ge\sum_{i=1}^n t_i g(a_i,b_i).
\end{equation}
For $n=0$, both assertions are understood as $0\ge0$.
\end{lemma}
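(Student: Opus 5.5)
The first inequality \eqref{eq:projection} comes from bounding $H(Z_J)$ in two directions. The weighted version \eqref{eq:weighted-proj} then follows by Abel summation over the initial segments of $\pi$.

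\emph{Step 1: lower bound on $H(Z_J)$.} By \eqref{eq:TC-split}, all three terms on its right-hand side are nonnegative, so $\operatorname{TC}(Z_J)\le\operatorname{TC}(Z)$. Since $Z_i$ is Bernoulli with mean $p_i=a_i(1-b_i)$, \eqref{eq:TC} gives
\[
 H(Z_J)=\sum_{i\in J}h(p_i)-\operatorname{TC}(Z_J)\ge\sum_{i\in J}h\bigl(a_i(1-b_i)\bigr)-\operatorname{TC}(Z).
\]

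\emph{Step 2: upper bound on $H(Z_J)$.} We have $H(Z_J)\le H(Z_J,B_J)=H(B_J)+H(Z_J\mid B_J)$. Fix a value $b$ of $B_J$. On the event $B_J=b$, the set $Z_J$ equals $A\cap(J\setminus b)$. Because $A$ is independent of $B$, the conditional law of $A$ given $B_J=b$ is its unconditional law. Hence, by \eqref{eq:subadditivity},
\[
 H(Z_J\mid B_J=b)=H(A_{J\setminus b})\le\sum_{i\in J\setminus b}h(a_i)=\sum_{i\in J}\mathbf 1_{\{i\notin b\}}h(a_i).
\]
Averaging over $b$ gives
\[
 H(Z_J\mid B_J)\le\sum_{i\in J}(1-b_i)h(a_i).
\]
Combining this with Step 1 and recalling the definition \eqref{eq:g} of $g$ yields \eqref{eq:projection}.

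\emph{Step 3: the weighted inequality.} Put $S_m=\{\pi(1),\dots,\pi(m)\}$ and $t_{\pi(n+1)}:=0$. Define
\[
 \lambda_m=t_{\pi(m)}-t_{\pi(m+1)}\ge0 .
\]
Then $t_{\pi(k)}=\sum_{m\ge k}\lambda_m$ and $\sum_m\lambda_m=t_{\pi(1)}\le1$. Since $S_m$ is an initial segment of $\pi$, the equality case noted after \eqref{eq:projection-chain} gives
\[
 H(B_{S_m})=\sum_{i\in S_m}c_i^B .
\]
Apply \eqref{eq:projection} with $J=S_m$, multiply by $\lambda_m$, and sum over $m$. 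By exchanging the order of summation, the $H(B_{S_m})$ terms combine to $\sum_i t_ic_i^B$ and the $g$ terms combine to $\sum_i t_ig(a_i,b_i)$. The left side still contains $\bigl(\sum_m\lambda_m\bigr)\operatorname{TC}(Z)$. Since $\sum_m\lambda_m\le1$ and $\operatorname{TC}(Z)\ge0$, this is at most $\operatorname{TC}(Z)$, which gives \eqref{eq:weighted-proj}. The case $n=0$ is trivial.

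No step is a serious obstacle. The only points needing care are the independence argument in Step 2 and the bookkeeping in Step 3, in particular using $t_i\le1$ to absorb the total weight on $\operatorname{TC}(Z)$.
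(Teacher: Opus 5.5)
Your proposal is correct and follows the paper's proof essentially step for step: Steps 1--2 are the paper's argument for \eqref{eq:projection}, and your Abel summation over the initial segments $S_m$ with weights $t_{\pi(m)}-t_{\pi(m+1)}$ is the discrete form of the paper's integration of \eqref{eq:projection} over $u\in[0,t_{\max}]$ with $J_u=\{i:t_i>u\}$, ending with the same absorption of $t_{\max}\operatorname{TC}(Z)$ into $\operatorname{TC}(Z)$. A small advantage of your version is that the sets $S_m$ are initial segments of $\pi$ by construction, so ties among the $t_i$ need no separate comment.
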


\begin{proof}
By \eqref{eq:TC-split},
\[
 \operatorname{TC}(Z)=\operatorname{TC}(Z_J)+\operatorname{TC}(Z_{J^c})+I(Z_J;Z_{J^c})
 \ge \operatorname{TC}(Z_J).
\]
For every $U\subseteq J$, on the event $B_J=U$ we have $Z_J=A_{J\setminus U}$. Since $A$ and $B$ are independent,
\begin{align*}
 H(Z_J\mid B_J)
 &=\sum_{U\subseteq J}\Prob(B_J=U)H(A_{J\setminus U})\\
 &\le\sum_{U\subseteq J}\Prob(B_J=U)
       \sum_{i\in J\setminus U}h(a_i)\\
 &=\sum_{i\in J}\Prob(i\notin B)h(a_i)
 =\sum_{i\in J}(1-b_i)h(a_i),
\end{align*}
where the inequality is \eqref{eq:subadditivity}. Moreover,
\[
 H(Z_J)\le H(Z_J,B_J)=H(B_J)+H(Z_J\mid B_J)
\]
by \eqref{eq:chain-two}. Hence
\[
 H(Z_J)\le H(B_J)+\sum_{i\in J}(1-b_i)h(a_i).
\]
Using $H(Z_i)=h(a_i(1-b_i))$ and the definition of $g$, we obtain
\begin{align*}
 \operatorname{TC}(Z_J)
 &=\sum_{i\in J}h(a_i(1-b_i))-H(Z_J)\\
 &\ge\sum_{i\in J}\bigl[h(a_i(1-b_i))-(1-b_i)h(a_i)\bigr]-H(B_J)\\
 &=\sum_{i\in J}g(a_i,b_i)-H(B_J),
\end{align*}
which proves \eqref{eq:projection}.

We now prove \eqref{eq:weighted-proj}. Let $t_{\max}=\max_i t_i$, with $t_{\max}=0$ when $n=0$, and for $0\le u<t_{\max}$ put
\[
 J_u=\{i:t_i>u\}.
\]
The definition is chosen so that coordinate $i$ belongs to $J_u$ for a set of values of $u$ of length $t_i$. Because the weights are ordered decreasingly by $\pi$, each $J_u$ is an initial segment of that order, and \eqref{eq:projection-chain} gives
$H(B_{J_u})=\sum_{i\in J_u}c_i^B$. We can therefore integrate \eqref{eq:projection} over $u\in[0,t_{\max}]$. Since
$\mathbf 1_{\{i\in J_u\}}=\mathbf 1_{\{u<t_i\}}$, the left-hand side becomes
\begin{align*}
 &\int_0^{t_{\max}}\left[\operatorname{TC}(Z)+H(B_{J_u})\right]du\\
 &\qquad=t_{\max}\operatorname{TC}(Z)
   +\sum_i c_i^B\int_0^{t_{\max}}\mathbf 1_{\{u<t_i\}}du
 =t_{\max}\operatorname{TC}(Z)+\sum_i t_i c_i^B,
\end{align*}
and similarly
\[
 \int_0^{t_{\max}}\sum_{i\in J_u}g(a_i,b_i)\,du
 =\sum_i t_i g(a_i,b_i).
\]
Therefore
\[
 t_{\max}\operatorname{TC}(Z)+\sum_i t_i c_i^B
 \ge\sum_i t_i g(a_i,b_i).
\]
Since $0\le t_{\max}\le1$ and $\operatorname{TC}(Z)\ge0$, replacing
$t_{\max}\operatorname{TC}(Z)$ by the larger quantity $\operatorname{TC}(Z)$ proves \eqref{eq:weighted-proj}.
\end{proof}

For $\lambda\ge0$ define
\begin{equation}\label{eq:Vhat}
\begin{split}
 (1+\lambda)\widehat V_\lambda(x,y)={}&h(p)+h(q)
 + (\lambda-x)_+h(y)+(\lambda-y)_+h(x)\\
 &-(x-\lambda)_+g(x,y)-(y-\lambda)_+g(y,x),
\end{split}
\end{equation}

\begin{lemma}\label{lem:strongV}
Let $(\cA,\cB)$ be a recovering pair of nonempty families on $[n]$, and let $A,B,T,a_i,b_i$ be as in \eqref{eq:basic-notation}. Then, for every $\lambda\ge0$,
\begin{equation}\label{eq:strongV}
 \log_2T\le\sum_{i=1}^n\widehat V_\lambda(a_i,b_i).
\end{equation}
\end{lemma}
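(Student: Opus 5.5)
The plan is to bound $(1+\lambda)\log_2T$ by writing it as $\log_2T+\lambda\log_2T$. The first copy is controlled through $Z=A\setminus B$ and $W=B\setminus A$ using the recovering property. The second copy is expanded via the chain-rule coefficients $c_i^A,c_i^B$. The leftover negative terms are then absorbed by Lemma~\ref{lem:projection}.

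\emph{Step 1: a sharpened recovery inequality.} By \eqref{eq:recovery}, $Z$ determines $A$, since $A\setminus B=A'\setminus B'$ forces $A=A'$. Hence
\[
H(Z)=H(A)+H(Z\mid A).
\]
Given $A=S$, the variable $Z$ is $S\setminus B$, which is in bijection with $B_S$. Independence of $A$ and $B$ then gives $H(Z\mid A)=\sum_S\Prob(A=S)H(B_S)$. Fix the orders $\pi$ (for $B$) and $\pi'$ (for $A$) in advance, and define $c^B,c^A$ with respect to them. By \eqref{eq:projection-chain}, $H(B_S)\ge\sum_{i\in S}c_i^B$, and so
\[
H(A)\le H(Z)-\sum_i a_ic_i^B .
\]
Symmetrically, $H(B)\le H(W)-\sum_i b_ic_i^A$. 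Writing $H(Z)=\sum_i h(p_i)-\operatorname{TC}(Z)$ and $H(W)=\sum_i h(q_i)-\operatorname{TC}(W)$, we get
\[
\log_2T\le\sum_i\bigl(h(p_i)+h(q_i)\bigr)-\operatorname{TC}(Z)-\operatorname{TC}(W)-\sum_i a_ic_i^B-\sum_i b_ic_i^A .
\]

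\emph{Step 2: add $\lambda\log_2T$.} Since $\lambda\log_2T=\lambda\sum_ic_i^A+\lambda\sum_ic_i^B$, adding it to Step 1 gives
\[
(1+\lambda)\log_2T\le\sum_i\bigl(h(p_i)+h(q_i)\bigr)+\Bigl[\sum_i(\lambda-a_i)c_i^B-\operatorname{TC}(Z)\Bigr]+\Bigl[\sum_i(\lambda-b_i)c_i^A-\operatorname{TC}(W)\Bigr].
\]
We bound the first bracket; the second is symmetric, with the roles of $A$ and $B$ exchanged. Split the coordinates into two groups.
\begin{itemize}
\item For $a_i\le\lambda$, use $0\le c_i^B\le h(b_i)$ to get $(\lambda-a_i)c_i^B\le(\lambda-a_i)_+h(b_i)$.
\item For $a_i>\lambda$, the term equals $-t_ic_i^B$ with $t_i=(a_i-\lambda)_+\in[0,1]$.
\end{itemize}
Now choose $\pi$ to order the $t_i$ decreasingly, which is allowed because Step 1 works for any order. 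Then \eqref{eq:weighted-proj} gives
\[
-\operatorname{TC}(Z)-\sum_it_ic_i^B\le-\sum_it_ig(a_i,b_i)=-\sum_i(a_i-\lambda)_+g(a_i,b_i).
\]
On coordinates with $a_i\le\lambda$ we have $t_i=0$, so they contribute nothing to this sum. The two groups therefore combine with no double counting. Summing the bounds for both brackets reproduces exactly $\sum_i(1+\lambda)\widehat V_\lambda(a_i,b_i)$, as defined in \eqref{eq:Vhat}.

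\emph{Main obstacle.} The delicate point is Step 1. The naive bound $H(A)\le H(Z)$ loses the term $\sum_i a_ic_i^B$, and that term is exactly what converts the coefficient $\max(\lambda,a_i)$ into $(\lambda-a_i)_+$. We also need to check that the same chain coefficients $c^B$ can serve both in Step 1 and in Lemma~\ref{lem:projection}. This holds because the inequality $H(B_S)\ge\sum_{i\in S}c_i^B$ is valid for every order $\pi$, so we are free to fix $\pi$ by sorting the $t_i$.
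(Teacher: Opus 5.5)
Your proposal is correct and is essentially the paper's proof. You use the same recovery identity $H(Z)=H(A)+\sum_S\Prob(A=S)H(B_S)$, the same chain-rule bound $\sum_i a_ic_i^B$ with $\pi$ sorted by $t_i=(a_i-\lambda)_+$, the same split of $\lambda-a_i$ into $(\lambda-a_i)_+-t_i$ together with $c_i^B\le h(b_i)$, and you finish with \eqref{eq:weighted-proj}. The only difference is cosmetic: you add $\lambda\log_2T$ after summing the two recovery inequalities, whereas the paper first pairs $\lambda H(B)$ with the $Z$-inequality and $\lambda H(A)$ with the $W$-inequality, then adds.
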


\begin{proof}
The recovering property implies that $A$ is determined by $Z=A\setminus B$ and $B$ is determined by $W=B\setminus A$. Hence $H(A,Z)=H(Z)$ and, by \eqref{eq:chain-two},
\begin{align*}
 H(Z)
 &=H(A)+H(Z\mid A)\\
 &=H(A)+\sum_{S\subseteq[n]}\Prob(A=S)H(S\setminus B\mid A=S)\\
 &=H(A)+\sum_{S\subseteq[n]}\Prob(A=S)H(B_S).
\end{align*}
The last equality uses independence of $A$ and $B$ and the bijection $B_S\mapsto S\setminus B$. Interchanging $A$ and $B$ gives
\begin{equation}\label{eq:recoveryentropy}
 H(Z)=H(A)+\sum_{S\subseteq[n]}\Prob(A=S)H(B_S),\qquad
 H(W)=H(B)+\sum_{S\subseteq[n]}\Prob(B=S)H(A_S).
\end{equation}

Choose an order of the coordinates by decreasing
$t_i=(a_i-\lambda)_+$ and let $c_i^B$ be the conditional entropies defined from that order as in \eqref{eq:projection-chain}. For each fixed set $S$, \eqref{eq:projection-chain} gives
$H(B_S)\ge\sum_{i\in S}c_i^B$. Multiplying by $\Prob(A=S)$ and summing over $S$ gives
\[
 \sum_{S\subseteq[n]}\Prob(A=S)H(B_S)\ge\sum_i a_i c_i^B.
\]
Since $a_i=\lambda+t_i-(\lambda-a_i)_+$,
\begin{align*}
 \sum_i a_i c_i^B
 &=\lambda\sum_i c_i^B+\sum_i t_i c_i^B
   -\sum_i(\lambda-a_i)_+c_i^B\\
 &=\lambda H(B)+\sum_i t_i c_i^B
   -\sum_i(\lambda-a_i)_+c_i^B.
\end{align*}
By \eqref{eq:projection-chain}, $c_i^B\le H(B_i)=h(b_i)$. Since each coefficient $(\lambda-a_i)_+$ is nonnegative,
\[
 -\sum_i(\lambda-a_i)_+c_i^B
 \ge -\sum_i(\lambda-a_i)_+h(b_i).
\]
Combining the last three displays gives
\begin{equation}\label{eq:projection-B}
 \sum_{S\subseteq[n]}\Prob(A=S)H(B_S)
 \ge \lambda H(B)+\sum_i t_i c_i^B-\sum_i(\lambda-a_i)_+h(b_i).
\end{equation}
Applying the same argument with $A$ and $B$ interchanged, with
$s_i=(b_i-\lambda)_+$, gives
\begin{equation}\label{eq:projection-A}
 \sum_{S\subseteq[n]}\Prob(B=S)H(A_S)
 \ge \lambda H(A)+\sum_i s_i c_i^A-\sum_i(\lambda-b_i)_+h(a_i),
\end{equation}
where $c_i^A$ denotes the conditional entropies associated with the order chosen for the weights $s_i$.

By \eqref{eq:TC} and the definitions of $p_i$ and $q_i$,
\[
 H(Z)=\sum_i h(p_i)-\operatorname{TC}(Z),
 \qquad
 H(W)=\sum_i h(q_i)-\operatorname{TC}(W).
\]
Combining the first identity in \eqref{eq:recoveryentropy} with \eqref{eq:projection-B} therefore gives
\begin{align*}
 H(A)+\lambda H(B)
 \le{}&\sum_i h(p_i)-\operatorname{TC}(Z)-\sum_i t_i c_i^B\\
 &+\sum_i(\lambda-a_i)_+h(b_i).
\end{align*}
Similarly, the second identity in \eqref{eq:recoveryentropy} and \eqref{eq:projection-A} give
\begin{align*}
 H(B)+\lambda H(A)
 \le{}&\sum_i h(q_i)-\operatorname{TC}(W)-\sum_i s_i c_i^A\\
 &+\sum_i(\lambda-b_i)_+h(a_i).
\end{align*}
Adding these two inequalities and using
$H(A)+H(B)=\log_2T$ yields
\begin{align*}
 (1+\lambda)\log_2T\le{}&\sum_i\bigl[h(p_i)+h(q_i)
 +(\lambda-a_i)_+h(b_i)+(\lambda-b_i)_+h(a_i)\bigr]\\
 &-\left[\operatorname{TC}(Z)+\sum_i t_i c_i^B\right]
 -\left[\operatorname{TC}(W)+\sum_i s_i c_i^A\right].
\end{align*}
By \eqref{eq:weighted-proj},
\[
 \operatorname{TC}(Z)+\sum_i t_i c_i^B
 \ge\sum_i t_i g(a_i,b_i),
\]
and the same inequality with $A$ and $B$ interchanged gives
\[
 \operatorname{TC}(W)+\sum_i s_i c_i^A
 \ge\sum_i s_i g(b_i,a_i).
\]
Since $t_i=(a_i-\lambda)_+$ and $s_i=(b_i-\lambda)_+$, we obtain
\begin{align*}
 (1+\lambda)\log_2T
 \le \sum_i\Bigl[&
 h(p_i)+h(q_i)+(\lambda-a_i)_+h(b_i)+(\lambda-b_i)_+h(a_i)\\
 &-(a_i-\lambda)_+g(a_i,b_i)
 -(b_i-\lambda)_+g(b_i,a_i)\Bigr].
\end{align*}
The bracket is
$(1+\lambda)\widehat V_\lambda(a_i,b_i)$ by \eqref{eq:Vhat}. Dividing by
$1+\lambda>0$ proves \eqref{eq:strongV}.
\end{proof}

\begin{lemma}\label{lem:remainder}
Let $A$ and $B$ be independent random subsets of $[n]$, set
$a_i=\Prob(i\in A)$, $b_i=\Prob(i\in B)$, and $Z=A\setminus B$.
For every $J\subseteq[n]$,
\begin{align}\label{eq:remainder}
 &\operatorname{TC}(Z)+H(B_J)-\sum_{i\in J}g(a_i,b_i)\notag\\
 &\quad=\operatorname{TC}(Z)-\operatorname{TC}(Z_J)
 +H(B_J\mid Z_J)
 +\sum_{U\subseteq J}\Prob(B_J=U)\operatorname{TC}(A_{J\setminus U}).
\end{align}
Every term on the second line is nonnegative. Moreover, for weights $t_i\in[0,1]$ ordered as in Lemma~\ref{lem:projection}, with $t_{\max}=\max_i t_i$ and $J_u=\{i:t_i>u\}$,
\begin{align}\label{eq:weighted-remainder}
 &\operatorname{TC}(Z)+\sum_i t_i c_i^B-\sum_i t_i g(a_i,b_i)\notag\\
 &\quad=(1-t_{\max})\operatorname{TC}(Z)
 +\int_0^{t_{\max}}\Bigl[
 \operatorname{TC}(Z)-\operatorname{TC}(Z_{J_u})
 +H(B_{J_u}\mid Z_{J_u})\notag\\
 &\hspace{27mm}+\sum_{U\subseteq J_u}\Prob(B_{J_u}=U)
 \operatorname{TC}(A_{J_u\setminus U})
 \Bigr]du.
\end{align}
For $n=0$, the identity is interpreted with $t_{\max}=0$.
\end{lemma}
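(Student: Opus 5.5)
The identity \eqref{eq:remainder} is proved by re-running the proof of Lemma~\ref{lem:projection} and recording every inequality as an equality with an explicit slack term. There were three inequalities there, and each slack is one of the terms on the second line.

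\emph{Step 1 (dropping part of the total correlation).} We used $\operatorname{TC}(Z)\ge\operatorname{TC}(Z_J)$. The slack is $\operatorname{TC}(Z)-\operatorname{TC}(Z_J)$, which equals $\operatorname{TC}(Z_{J^c})+I(Z_J;Z_{J^c})\ge0$ by \eqref{eq:TC-split}.

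\emph{Step 2 (the bound $H(Z_J)\le H(Z_J,B_J)$).} We write this exactly as
\[
 H(Z_J)=H(Z_J,B_J)-H(B_J\mid Z_J)=H(B_J)+H(Z_J\mid B_J)-H(B_J\mid Z_J),
\]
using \eqref{eq:chain-two} twice. The slack is $H(B_J\mid Z_J)\ge0$.

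\emph{Step 3 (subadditivity).} As in the earlier proof, on the event $B_J=U$ we have $Z_J=A_{J\setminus U}$, and $A$ is independent of $B$. Hence
\[
 H(Z_J\mid B_J)=\sum_{U\subseteq J}\Prob(B_J=U)\Bigl[\sum_{i\in J\setminus U}h(a_i)-\operatorname{TC}(A_{J\setminus U})\Bigr].
\]
This uses the definition \eqref{eq:TC} applied to $A$. The double sum collapses to $\sum_{i\in J}(1-b_i)h(a_i)$, exactly as before.

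\emph{Assembling \eqref{eq:remainder}.} Substitute these into
\[
 \operatorname{TC}(Z)=\bigl[\operatorname{TC}(Z)-\operatorname{TC}(Z_J)\bigr]+\sum_{i\in J}h(p_i)-H(Z_J),
\]
and use the definition \eqref{eq:g} of $g$. This gives the identity after a cancellation of $H(B_J)$. Each term on the second line is nonnegative: the first by \eqref{eq:TC-split}, the second as a conditional entropy, and the third because total correlation is nonnegative by \eqref{eq:subadditivity}. The empty set $J=\varnothing$ needs no separate treatment, since both sides vanish term by term given $\operatorname{TC}(X_\varnothing)=0$.

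\emph{Weighted identity \eqref{eq:weighted-remainder}.} Apply \eqref{eq:remainder} with $J=J_u$ for each $u\in[0,t_{\max})$ and integrate over $u$. On the left, the integration computation from the proof of Lemma~\ref{lem:projection} applies verbatim. It uses that each $J_u$ is an initial segment of $\pi$, so $H(B_{J_u})=\sum_{i\in J_u}c_i^B$ by the equality case of \eqref{eq:projection-chain}. It then uses $\int_0^{t_{\max}}\mathbf 1_{\{u<t_i\}}\,du=t_i$. The result is
\[
 t_{\max}\operatorname{TC}(Z)+\sum_i t_ic_i^B-\sum_i t_ig(a_i,b_i).
\]
Adding $(1-t_{\max})\operatorname{TC}(Z)$ to both sides yields \eqref{eq:weighted-remainder}.

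\emph{Technical points.} The integrand is a step function in $u$, since $J_u$ takes finitely many values, so integrability is immediate. When $n=0$ or $t_{\max}=0$, the integral is empty and the identity reduces to $\operatorname{TC}(Z)=\operatorname{TC}(Z)$.

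The only step needing real care is Step 3. One must check that conditioning on $B_J=U$ leaves the law of $A_{J\setminus U}$ unchanged, which holds by independence. This justifies writing $H(Z_J\mid B_J=U)=H(A_{J\setminus U})$ exactly rather than as an inequality. Everything else is bookkeeping.
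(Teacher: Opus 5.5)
Your proof is correct and is essentially the paper's argument: the paper also expands the left side, uses $H(B_J)-H(Z_J)=H(B_J\mid Z_J)-H(Z_J\mid B_J)$ (your Step 2) and the exact identity $\sum_{i\in J}(1-b_i)h(a_i)-H(Z_J\mid B_J)=\sum_{U\subseteq J}\Prob(B_J=U)\operatorname{TC}(A_{J\setminus U})$ (your Step 3), then integrates with $J=J_u$ and adds $(1-t_{\max})\operatorname{TC}(Z)$. One wording slip: for $J=\varnothing$ the two sides do not vanish; both reduce to $\operatorname{TC}(Z)$.
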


\begin{proof}
For a fixed $U\subseteq J$, conditioning on $B_J=U$ makes $Z_J$ the projection of $A$ onto $J\setminus U$. Independence therefore gives
\begin{subequations}\label{eq:conditional-TC-identities}
\begin{equation}\label{eq:conditional-projection}
 H(Z_J\mid B_J)
 =\sum_{U\subseteq J}\Prob(B_J=U)H(A_{J\setminus U}).
\end{equation}
Also
\begin{equation}\label{eq:marginal-projection}
 \sum_{i\in J}(1-b_i)h(a_i)
 =\sum_{U\subseteq J}\Prob(B_J=U)
   \sum_{i\in J\setminus U}h(a_i).
\end{equation}
Subtracting \eqref{eq:conditional-projection} from \eqref{eq:marginal-projection} gives
\begin{equation}\label{eq:conditional-TC}
 \sum_{i\in J}(1-b_i)h(a_i)-H(Z_J\mid B_J)
 =\sum_{U\subseteq J}\Prob(B_J=U)\operatorname{TC}(A_{J\setminus U}).
\end{equation}
\end{subequations}
Now expand the left-hand side of \eqref{eq:remainder}. Since
$H(Z_i)=h(a_i(1-b_i))$,
\begin{align*}
 &\operatorname{TC}(Z)+H(B_J)-\sum_{i\in J}g(a_i,b_i)\\
 &\quad=\operatorname{TC}(Z)+H(B_J)
 -\sum_{i\in J}H(Z_i)+\sum_{i\in J}(1-b_i)h(a_i)\\
 &\quad=\operatorname{TC}(Z)-\operatorname{TC}(Z_J)
 +H(B_J)-H(Z_J)+\sum_{i\in J}(1-b_i)h(a_i).
\end{align*}
The chain rule gives
\[
 H(B_J)-H(Z_J)=H(B_J\mid Z_J)-H(Z_J\mid B_J).
\]
Substituting this identity and then \eqref{eq:conditional-TC} gives \eqref{eq:remainder}. Finally,
\[
 \operatorname{TC}(Z)-\operatorname{TC}(Z_J)
 =\operatorname{TC}(Z_{J^c})+I(Z_J;Z_{J^c})\ge0
\]
by \eqref{eq:TC-split}; the other terms on the right-hand side of \eqref{eq:remainder} are nonnegative by definition or by \eqref{eq:subadditivity}.
Integrate \eqref{eq:remainder} for $J=J_u$ over $0\le u\le t_{\max}$. Since $J_u$ is an initial segment of the chosen order, \eqref{eq:projection-chain} gives
$H(B_{J_u})=\sum_{i\in J_u}c_i^B$. Therefore
\begin{align*}
 \int_0^{t_{\max}}H(B_{J_u})\,du&=\sum_i t_i c_i^B,\\
 \int_0^{t_{\max}}\sum_{i\in J_u}g(a_i,b_i)\,du
 &=\sum_i t_i g(a_i,b_i).
\end{align*}
The integral of $\operatorname{TC}(Z)$ is $t_{\max}\operatorname{TC}(Z)$. Thus the integrated form of \eqref{eq:remainder} has
$t_{\max}\operatorname{TC}(Z)+\sum_i t_i c_i^B-\sum_i t_i g(a_i,b_i)$ on the left. Adding
$(1-t_{\max})\operatorname{TC}(Z)$ to both sides gives \eqref{eq:weighted-remainder}.
\end{proof}

\begin{lemma}\label{lem:conditional}
Let $(\cA,\cB)$ be a recovering pair of nonempty families on $[n]$, and use the notation in \eqref{eq:basic-notation}. Put
\[
 \Sigma_Z=\sum_{i=1}^n h(p_i),\qquad
 \Sigma_W=\sum_{i=1}^n h(q_i).
\]
Then, for every $J\subseteq[n]$,
\begin{align}\label{eq:conditional}
 H(B_J\mid Z_J)&\ge
 \left[\log_2T-\Sigma_Z-\sum_{i\notin J}(1-a_i)h(b_i)\right]_+,\notag\\
 H(A_J\mid W_J)&\ge
 \left[\log_2T-\Sigma_W-\sum_{i\notin J}(1-b_i)h(a_i)\right]_+.
\end{align}
\end{lemma}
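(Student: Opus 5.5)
The plan is to condition on all of $Z$ instead of $Z_J$ and then use the recovering property to pass to a conditional entropy of $B$ given $Z$. Since conditioning reduces entropy \eqref{eq:conditioning},
\[
 H(B_J\mid Z_J)\ge H(B_J\mid Z).
\]
By the chain rule \eqref{eq:chain-two},
\[
 H(B\mid Z)=H(B_J\mid Z)+H(B_{J^c}\mid Z,B_J),
\]
so it suffices to prove the two estimates
\[
 H(B\mid Z)\ge \log_2T-\Sigma_Z,
 \qquad
 H(B_{J^c}\mid Z,B_J)\le \sum_{i\notin J}(1-a_i)h(b_i).
\]
The positive part in \eqref{eq:conditional} then comes for free, because $H(B_J\mid Z_J)\ge0$.

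For the first estimate I use the recovering property: $A$ is a function of $Z$, and $Z$ is a function of $(A,B)$. Hence
\[
 H(A,B)=H(Z)+H(B\mid Z),
\]
and so
\[
 H(B\mid Z)=\log_2T-H(Z)\ge\log_2T-\Sigma_Z
\]
by subadditivity \eqref{eq:subadditivity} together with $H(Z_i)=h(p_i)$.

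The second estimate is the step needing care. The key observation is that $(A,\,B\cap A)$ is a function of $Z$: $A$ is recovered from $Z$, and then $B\cap A=A\setminus Z$. Therefore conditioning on $(Z,B_J)$ is at least as informative as conditioning on $(A,\,B_{J^c}\cap A)$. Given these, the only unknown part of $B_{J^c}$ is $B_{J^c\setminus A}$. This yields
\[
 H(B_{J^c}\mid Z,B_J)\le H(B_{J^c}\mid A,B_{J^c}\cap A)
 = H(B_{J^c\setminus A}\mid A,B_{J^c}\cap A)\le H(B_{J^c\setminus A}\mid A).
\]
Now I expand over the events $A=S$ and use the independence of $A$ and $B$, followed by subadditivity:
\[
 H(B_{J^c\setminus A}\mid A)=\sum_S\Prob(A=S)\,H(B_{J^c\setminus S})
 \le\sum_S\Prob(A=S)\sum_{i\in J^c\setminus S}h(b_i)
 =\sum_{i\notin J}(1-a_i)h(b_i).
\]
This is the same computation as in the proof of Lemma~\ref{lem:projection}.

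The second inequality in \eqref{eq:conditional} follows by interchanging the roles of $A$ and $B$ (and of $Z$ and $W$). This uses the other half of \eqref{eq:recovery}, namely that $B$ is a function of $W$. I expect the main obstacle to be the bookkeeping in the second estimate, specifically justifying that $(A,\,B_{J^c}\cap A)$ is determined by $Z$. This is exactly where recovering, not merely cancellative, is used.
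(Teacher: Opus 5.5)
Your proof is correct and follows the paper's argument. It uses the same reduction $H(B_J\mid Z_J)\ge H(B_J\mid Z)$, the same identity $H(B\mid Z)=\log_2T-H(Z)$ from recovery, and the same observation that $A$ and $B\cap A$ are determined by $Z$, so only $B_{J^c\setminus A}$ remains and it is independent of $A$. The one cosmetic difference is that you bound $H(B_{J^c}\mid Z,B_J)$ directly, whereas the paper first drops $B_J$ from the conditioning and then bounds $H(B_{J^c}\mid Z)$.
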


\begin{proof}
Because $A$ is determined by $Z$ and $Z$ is a function of $(A,B)$,
\[
 H(B\mid Z)=H(A,B\mid Z)=H(A,B)-H(Z)=\log_2T-H(Z).
\]
Because $A$ is a function of $Z$,
\[
 H(B_{J^c}\mid Z)=H(B_{J^c}\mid A,Z).
\]
Fix a value $A=S$. For $i\in S$, the relation $Z=A\setminus B$ gives
$B_i=1-Z_i$, so these coordinates of $B$ are determined once $A=S$ and $Z$ are known. Consequently, once $A=S$ and $Z$ are fixed, the vector $B_{J^c}$ is determined by its coordinates in $J^c\setminus S$. Thus
\[
 H(B_{J^c}\mid A=S,Z)
 =H(B_{J^c\setminus S}\mid A=S,Z)
 \le H(B_{J^c\setminus S}\mid A=S).
\]
The last inequality is \eqref{eq:conditioning}. Since $A$ and $B$ are independent,
$H(B_{J^c\setminus S}\mid A=S)=H(B_{J^c\setminus S})$, and \eqref{eq:subadditivity} gives
\[
 H(B_{J^c\setminus S})
 \le\sum_{i\in J^c\setminus S}h(b_i).
\]
Averaging over $A$ therefore yields
\begin{align*}
 H(B_{J^c}\mid Z)
 &\le \sum_{S\subseteq[n]}\Prob(A=S)
      \sum_{i\in J^c\setminus S}h(b_i)\\
 &=\sum_{i\notin J}\Prob(i\notin A)h(b_i)
 =\sum_{i\notin J}(1-a_i)h(b_i).
\end{align*}

Next, the chain rule and \eqref{eq:conditioning} give
\begin{align*}
 H(B\mid Z)
 &=H(B_J\mid Z)+H(B_{J^c}\mid B_J,Z)\\
 &\le H(B_J\mid Z)+H(B_{J^c}\mid Z).
\end{align*}
Hence
\[
 H(B_J\mid Z)\ge H(B\mid Z)-H(B_{J^c}\mid Z).
\]
Since $Z$ contains $Z_J$, conditioning on $Z$ can only decrease entropy, so
\[
 H(B_J\mid Z_J)\ge H(B_J\mid Z).
\]
Combining these inequalities with the bounds for $H(B\mid Z)$ and $H(B_{J^c}\mid Z)$ obtained above gives
\begin{align*}
 H(B_J\mid Z_J)
 &\ge \log_2T-H(Z)-\sum_{i\notin J}(1-a_i)h(b_i)\\
 &\ge \log_2T-\Sigma_Z-\sum_{i\notin J}(1-a_i)h(b_i).
\end{align*}
Let
\[
 R_J=\log_2T-\Sigma_Z-\sum_{i\notin J}(1-a_i)h(b_i).
\]
We have proved $H(B_J\mid Z_J)\ge R_J$. Since conditional entropy is also nonnegative,
\[
 H(B_J\mid Z_J)\ge\max\{R_J,0\}=[R_J]_+,
\]
which is the first inequality in \eqref{eq:conditional}. Interchanging $A$ with $B$ and $Z$ with $W$ gives the second inequality.
\end{proof}

\begin{lemma}\label{lem:integrated}
Let $(\cA,\cB)$ be a recovering pair of nonempty families on $[n]$, and use the notation in \eqref{eq:basic-notation} and the quantities $\Sigma_Z,\Sigma_W$ defined in Lemma~\ref{lem:conditional}. Fix $\lambda\ge0$ and put
\[
 t_i=(a_i-\lambda)_+,
 \qquad s_i=(b_i-\lambda)_+,
 \qquad J_u=\{i:t_i>u\},
 \qquad K_u=\{i:s_i>u\}.
\]
Let $t_{\max}=\max_i t_i$ and $s_{\max}=\max_i s_i$, both interpreted as $0$ when $n=0$. Then
\begin{align}\label{eq:integrated}
 &(1+\lambda)\left[\sum_i\widehat V_\lambda(a_i,b_i)-\log_2T\right]\notag\\
 &\quad\ge
 \int_0^{t_{\max}}
 \left[\log_2T-\Sigma_Z-\sum_{i\notin J_u}(1-a_i)h(b_i)\right]_+du\notag\\
 &\qquad+
 \int_0^{s_{\max}}
 \left[\log_2T-\Sigma_W-\sum_{i\notin K_u}(1-b_i)h(a_i)\right]_+du.
\end{align}
\end{lemma}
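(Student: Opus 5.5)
The plan is to redo the proof of Lemma~\ref{lem:strongV} while keeping track of every slack term, instead of discarding them. Adding the two inequalities in that proof gave
\[
 (1+\lambda)\log_2T\le(1+\lambda)\sum_i\widehat V_\lambda(a_i,b_i)-D_Z-D_W-E,
\]
where the three nonnegative deficits are
\[
 D_Z=\operatorname{TC}(Z)+\sum_i t_i c_i^B-\sum_i t_i g(a_i,b_i),
 \qquad
 D_W=\operatorname{TC}(W)+\sum_i s_i c_i^A-\sum_i s_i g(b_i,a_i),
\]
and $E\ge0$ collects the losses from $\sum_S\Prob(A=S)H(B_S)\ge\sum_i a_ic_i^B$, from $c_i^B\le h(b_i)$, and from their $A\leftrightarrow B$ analogues. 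Dropping $E$, we get
\[
 (1+\lambda)\left[\sum_i\widehat V_\lambda(a_i,b_i)-\log_2T\right]\ge D_Z+D_W.
\]
It therefore suffices to bound $D_Z$ below by the first integral in \eqref{eq:integrated}, and $D_W$ by the second.

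For $D_Z$, apply the identity \eqref{eq:weighted-remainder} of Lemma~\ref{lem:remainder} with the weights $t_i=(a_i-\lambda)_+\in[0,1]$, ordered decreasingly as in Lemma~\ref{lem:strongV}. This matters because $c_i^B$ must be the conditional entropies for the same order, which is exactly how they were chosen there. The identity writes $D_Z$ as $(1-t_{\max})\operatorname{TC}(Z)$ plus an integral over $u\in[0,t_{\max}]$ of four terms. Three of them, $(1-t_{\max})\operatorname{TC}(Z)$, $\operatorname{TC}(Z)-\operatorname{TC}(Z_{J_u})$ and the averaged $\operatorname{TC}(A_{J_u\setminus U})$, are nonnegative by Lemma~\ref{lem:remainder}, so we drop them. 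This leaves
\[
 D_Z\ge\int_0^{t_{\max}}H(B_{J_u}\mid Z_{J_u})\,du.
\]
Lemma~\ref{lem:conditional} with $J=J_u$ bounds the integrand pointwise in $u$ by $\bigl[\log_2T-\Sigma_Z-\sum_{i\notin J_u}(1-a_i)h(b_i)\bigr]_+$, which gives the first integral. The symmetric argument, with $s_i$, $K_u$, $W$ and the second inequality of Lemma~\ref{lem:conditional}, gives the bound on $D_W$.

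The main obstacle is bookkeeping rather than any new idea. The $D_Z$, $D_W$ in the first display must be exactly the left-hand side of \eqref{eq:weighted-remainder}, with the same $c^B$ and the same ordering, and every other discarded term must have the right sign. In particular, the step replacing $-\sum_i(\lambda-a_i)_+c_i^B$ by $-\sum_i(\lambda-a_i)_+h(b_i)$ must be checked to only loosen the bound in the correct direction. The degenerate case $n=0$, where $t_{\max}=s_{\max}=0$, is trivial since both sides vanish.
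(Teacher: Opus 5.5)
Your proposal is correct and follows essentially the same route as the paper: combine the intermediate inequality from the proof of Lemma~\ref{lem:strongV} with the summed definition of $\widehat V_\lambda$ to isolate $D_Z+D_W$, then apply \eqref{eq:weighted-remainder} while dropping its nonnegative terms, and use Lemma~\ref{lem:conditional} pointwise in $u$. Your explicit slack term $E$ is harmless but unnecessary, since that inequality has already discarded those losses.
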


\begin{proof}
The proof of Lemma~\ref{lem:strongV} gives
\begin{align}
 (1+\lambda)\log_2T\le{}&\sum_i\bigl[h(p_i)+h(q_i)
 +(\lambda-a_i)_+h(b_i)+(\lambda-b_i)_+h(a_i)\bigr]\notag\\
 &-\left[\operatorname{TC}(Z)+\sum_i t_i c_i^B\right]
 -\left[\operatorname{TC}(W)+\sum_i s_i c_i^A\right].\label{eq:integrated-upper}
\end{align}
On the other hand, summing \eqref{eq:Vhat} over the coordinates gives
\begin{align}
 (1+\lambda)\sum_i\widehat V_\lambda(a_i,b_i)
 =\sum_i\bigl[&
 h(p_i)+h(q_i)+(\lambda-a_i)_+h(b_i)+(\lambda-b_i)_+h(a_i)\notag\\
 &-t_i g(a_i,b_i)-s_i g(b_i,a_i)\bigr].\label{eq:integrated-Vsum}
\end{align}
Subtracting \eqref{eq:integrated-upper} from \eqref{eq:integrated-Vsum} yields
\begin{align*}
 &(1+\lambda)\left[\sum_i\widehat V_\lambda(a_i,b_i)-\log_2T\right]\\
 &\quad\ge
 \left[\operatorname{TC}(Z)+\sum_i t_i c_i^B-\sum_i t_i g(a_i,b_i)\right]\\
 &\qquad+
 \left[\operatorname{TC}(W)+\sum_i s_i c_i^A-\sum_i s_i g(b_i,a_i)\right].
\end{align*}

Apply \eqref{eq:weighted-remainder} to the first bracket. Its right-hand side is
$(1-t_{\max})\operatorname{TC}(Z)$ plus an integral whose summands are all nonnegative. In particular,
\[
 \operatorname{TC}(Z)+\sum_i t_i c_i^B-\sum_i t_i g(a_i,b_i)
 \ge\int_0^{t_{\max}}H(B_{J_u}\mid Z_{J_u})\,du.
\]
Applying the same identity with $A$ and $B$ interchanged gives
\[
 \operatorname{TC}(W)+\sum_i s_i c_i^A-\sum_i s_i g(b_i,a_i)
 \ge\int_0^{s_{\max}}H(A_{K_u}\mid W_{K_u})\,du.
\]
For every $0\le u\le t_{\max}$, Lemma~\ref{lem:conditional} with $J=J_u$ gives
\[
 H(B_{J_u}\mid Z_{J_u})\ge
 \left[\log_2T-\Sigma_Z-\sum_{i\notin J_u}(1-a_i)h(b_i)\right]_+.
\]
The same lemma with $A,B$ interchanged and $J=K_u$ gives the corresponding bound for $H(A_{K_u}\mid W_{K_u})$. Substituting these two pointwise bounds into the two integrals proves \eqref{eq:integrated}.
\end{proof}

For $0\le\lambda\le\tau\le1$ define
\begin{equation}\label{eq:W}
\begin{aligned}
 (1+2\tau-\lambda)W_{\lambda,\tau}(x,y)
 ={}&(1+\lambda)\widehat V_\lambda(x,y)
 +(\tau-\lambda)[h(p)+h(q)]\\
 &+(1-x)h(y)(\tau-\max\{\lambda,x\})_+\\
 &+(1-y)h(x)(\tau-\max\{\lambda,y\})_+.
\end{aligned}
\end{equation}

\begin{theorem}\label{thm:W}
Let $(\cA,\cB)$ be a recovering pair of nonempty families on $[n]$, and let $A,B,T,a_i,b_i$ be as in \eqref{eq:basic-notation}. Then, for every $0\le\lambda\le\tau\le1$,
\begin{equation}\label{eq:Wglobal}
 \log_2T\le\sum_{i=1}^n W_{\lambda,\tau}(a_i,b_i).
\end{equation}
\end{theorem}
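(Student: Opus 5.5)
The plan is to derive \eqref{eq:Wglobal} directly from Lemma~\ref{lem:integrated} by bounding each integral on its right-hand side from below by an explicit linear expression in $\log_2T$. Write $L=\log_2T$ and $\delta=\tau-\lambda\in[0,1-\lambda]$. Multiplying \eqref{eq:W} by nothing and summing over $i$, the claim \eqref{eq:Wglobal} is equivalent to
\[
 (1+2\tau-\lambda)L\le(1+\lambda)\sum_i\widehat V_\lambda(a_i,b_i)+\delta(\Sigma_Z+\Sigma_W)
 +\sum_i(1-a_i)h(b_i)(\tau-\max\{\lambda,a_i\})_+
 +\sum_i(1-b_i)h(a_i)(\tau-\max\{\lambda,b_i\})_+ .
\]
Since $1+2\tau-\lambda=(1+\lambda)+2\delta$, it suffices, in view of \eqref{eq:integrated}, to show
\[
 \int_0^{t_{\max}}\Bigl[L-\Sigma_Z-\sum_{i\notin J_u}(1-a_i)h(b_i)\Bigr]_+du
 \ge \delta(L-\Sigma_Z)-\sum_i(1-a_i)h(b_i)(\tau-\max\{\lambda,a_i\})_+ ,
\]
together with the symmetric inequality for the second integral (interchanging $A$ and $B$, $Z$ and $W$, $t$ and $s$). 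Adding the two and rearranging then gives the statement.

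To prove the displayed inequality, let $E(u)=L-\Sigma_Z-\sum_{i\notin J_u}(1-a_i)h(b_i)$, defined for all $u\ge0$. The key observation is that $E(u)\le0$ whenever $u\ge t_{\max}$. Indeed, then $J_u=\varnothing$, and the first part of the proof of Lemma~\ref{lem:conditional} (with $J=\varnothing$) gives $L-H(Z)=H(B\mid Z)\le\sum_i(1-a_i)h(b_i)$. Combined with $H(Z)\le\Sigma_Z$ from \eqref{eq:subadditivity}, this yields $E(u)\le0$.

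Hence, whether $t_{\max}\ge\delta$ or $t_{\max}<\delta$, we get
\[
\int_0^{t_{\max}}[E(u)]_+du\ge\int_0^{\delta}E(u)\,du.
\]
If $t_{\max}\ge\delta$, drop the nonnegative part on $[\delta,t_{\max}]$ and use $[E]_+\ge E$. If $t_{\max}<\delta$, add the nonpositive contribution of $[t_{\max},\delta]$.

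It remains to evaluate $\int_0^\delta E(u)\,du$. Coordinate $i$ lies outside $J_u$ exactly when $u\ge t_i$, so the measure of such $u\in[0,\delta]$ is $(\delta-t_i)_+$. Because $t_i=(a_i-\lambda)_+$, we have
\[
 (\delta-t_i)_+=(\tau-\lambda-(a_i-\lambda)_+)_+=(\tau-\max\{\lambda,a_i\})_+ .
\]
This gives exactly $\int_0^\delta E=\delta(L-\Sigma_Z)-\sum_i(1-a_i)h(b_i)(\tau-\max\{\lambda,a_i\})_+$.

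The only delicate step is the sign observation $E(u)\le0$ beyond $t_{\max}$, which lets the integration range be changed from $[0,t_{\max}]$ to $[0,\delta]$ regardless of how $t_{\max}$ compares with $\delta$. Everything else is bookkeeping. The hypothesis $\tau\le1$ ensures $\delta\le1-\lambda$, although the argument above does not actually need it.
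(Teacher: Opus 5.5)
Your proposal is correct and follows essentially the same route as the paper. Both start from Lemma~\ref{lem:integrated}, use the $J=\varnothing$ case of Lemma~\ref{lem:conditional} to see that the integrand is nonpositive (so its positive part vanishes) once $u\ge t_{\max}$, move the upper limit to $\tau-\lambda$, drop the positive part, and evaluate the integral coordinatewise to get the weights $(\tau-\max\{\lambda,a_i\})_+$. The only differences are cosmetic: you integrate in $u$ directly instead of substituting $v=\lambda+u$, and you correctly observe that $\tau\le1$ is not needed for this step.
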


\begin{proof}
Let $\Sigma_Z,\Sigma_W$ be as in Lemma~\ref{lem:conditional}, and set
\[
 t_i=(a_i-\lambda)_+,\qquad s_i=(b_i-\lambda)_+,\qquad
 t_{\max}=\max_i t_i,\qquad s_{\max}=\max_i s_i,
\]
with both maxima interpreted as $0$ when $n=0$. For $u\ge0$ put
$J_u=\{i:t_i>u\}$ and $K_u=\{i:s_i>u\}$. Finally, for $v\in[\lambda,1]$ define
\[
 C_B(v)=\sum_{i:a_i\le v}(1-a_i)h(b_i),\qquad
 C_A(v)=\sum_{i:b_i\le v}(1-b_i)h(a_i).
\]
For $u\ge0$,
\[
 i\notin J_u
 \Longleftrightarrow (a_i-\lambda)_+\le u
 \Longleftrightarrow a_i\le\lambda+u,
\]
and the same equivalence holds with $b_i$ and $K_u$ in place of $a_i$ and $J_u$. Hence, whenever $0\le u\le1-\lambda$,
\begin{equation}\label{eq:CA-CB-identities}
 \sum_{i\notin J_u}(1-a_i)h(b_i)=C_B(\lambda+u),
 \qquad
 \sum_{i\notin K_u}(1-b_i)h(a_i)=C_A(\lambda+u).
\end{equation}
Lemma~\ref{lem:conditional} with $J=\varnothing$ gives
\begin{equation}\label{eq:empty-set}
 \log_2T-\Sigma_Z-C_B(1)\le0,
 \qquad
 \log_2T-\Sigma_W-C_A(1)\le0.
\end{equation}
If $t_{\max}\le u\le1-\lambda$, then $J_u=\varnothing$ and $\lambda+u\ge\max_i a_i$, so $C_B(\lambda+u)=C_B(1)$. Using \eqref{eq:empty-set}, the first integrand in \eqref{eq:integrated} is therefore zero. Similarly, the second integrand is zero for $s_{\max}\le u\le1-\lambda$. In formulas,
\begin{equation}\label{eq:integrand-zero}
 \begin{aligned}
 \left[\log_2T-\Sigma_Z-C_B(\lambda+u)\right]_+&=0
 &&(t_{\max}\le u\le1-\lambda),\\
 \left[\log_2T-\Sigma_W-C_A(\lambda+u)\right]_+&=0
 &&(s_{\max}\le u\le1-\lambda).
 \end{aligned}
\end{equation}
Because $\tau-\lambda\le1-\lambda$, equations \eqref{eq:CA-CB-identities} and \eqref{eq:integrand-zero} justify replacing the upper limit of either integral in \eqref{eq:integrated} by $\tau-\lambda$: if the new interval is longer, \eqref{eq:integrand-zero} shows that the added part is zero; if it is shorter, only nonnegative terms are removed. We then use
$[r]_+\ge r$ and substitute $v=\lambda+u$. This gives
\begin{align*}
 &(1+\lambda)\left[\sum_i\widehat V_\lambda(a_i,b_i)-\log_2T\right]\\
 &\quad\ge
 \int_\lambda^\tau[\log_2T-\Sigma_Z-C_B(v)]\,dv
 +\int_\lambda^\tau[\log_2T-\Sigma_W-C_A(v)]\,dv.
\end{align*}
Now
\begin{align*}
 \int_\lambda^\tau C_B(v)\,dv
 &=\sum_i(1-a_i)h(b_i)
   \int_\lambda^\tau \mathbf 1_{\{a_i\le v\}}\,dv\\
 &=\sum_i(1-a_i)h(b_i)(\tau-\max\{\lambda,a_i\})_+,
\end{align*}
and, in the same way,
\[
 \int_\lambda^\tau C_A(v)\,dv
 =\sum_i(1-b_i)h(a_i)(\tau-\max\{\lambda,b_i\})_+.
\]
Consequently,
\begin{align*}
 &(1+\lambda)\left[\sum_i\widehat V_\lambda(a_i,b_i)-\log_2T\right]\\
 &\quad\ge
 2(\tau-\lambda)\log_2T-(\tau-\lambda)(\Sigma_Z+\Sigma_W)\\
 &\qquad-
 \sum_i(1-a_i)h(b_i)(\tau-\max\{\lambda,a_i\})_+\\
 &\qquad-
 \sum_i(1-b_i)h(a_i)(\tau-\max\{\lambda,b_i\})_+.
\end{align*}
Move the terms on the right other than $2(\tau-\lambda)\log_2T$ to the opposite side and use
\[
 \Sigma_Z+\Sigma_W=\sum_i\bigl[h(p_i)+h(q_i)\bigr].
\]
We obtain
\begin{align*}
 &(1+2\tau-\lambda)\log_2T\\
 &\quad\le\sum_i\Bigl[
 (1+\lambda)\widehat V_\lambda(a_i,b_i)
 +(\tau-\lambda)\bigl(h(p_i)+h(q_i)\bigr)\\
 &\hspace{35mm}
 +(1-a_i)h(b_i)(\tau-\max\{\lambda,a_i\})_+\\
 &\hspace{35mm}
 +(1-b_i)h(a_i)(\tau-\max\{\lambda,b_i\})_+
 \Bigr].
\end{align*}
Each bracket is
$(1+2\tau-\lambda)W_{\lambda,\tau}(a_i,b_i)$ by \eqref{eq:W}. Since
$1+2\tau-\lambda>0$, division proves \eqref{eq:Wglobal}. When $\tau=\lambda$, the added terms vanish and \eqref{eq:Wglobal} reduces to Lemma~\ref{lem:strongV}. The case $n=0$ is $0\le0$, and the endpoint values of the marginals are included because $h$ is continuous on $[0,1]$.
\end{proof}

\subsection{A second estimate using cancellative pairs}\label{subsec:cancellative-entropy}
We shall use the Fang--Huang bound for cancellative pairs below. They use the equivalent union formulation: a pair $(\cC,\cD)$ is cancellative if, for all $C,C'\in\cC$ and $D,D'\in\cD$,
\[
 C\cup D=C'\cup D\Longrightarrow C=C',\qquad
 C\cup D=C\cup D'\Longrightarrow D=D'.
\]
Indeed, for fixed sets $C,C',D$,
\[
 C\cup D=C'\cup D
 \quad\Longleftrightarrow\quad
 C\setminus D=C'\setminus D,
\]
and the same equivalence with the two families interchanged gives the second condition. Thus this formulation is equivalent to \eqref{eq:cancellative}.

We use the following consequences of Mond, Souza and Versteegen~\cite[Observations~3.2--3.3 and Proposition~3.4]{MSV}.

\begin{proposition}\label{prop:MSV}
Let $(\cA,\cB)$ be a recovering pair of nonempty families on $[n]$. Let $A$ be uniform on $\cA$ and set $a_i=\Prob(i\in A)$. Fix $C\in\cA$ and $B_0\in\cB$, put $S=C\setminus B_0$, and for $U\subseteq S$ define
\[
 \phi_{C,S,U}:\{A'\in\cA:A'\cap S=U\}\longrightarrow 2^{[n]\setminus C},
 \qquad \phi_{C,S,U}(A')=A'\setminus C,
\]
\[
 \psi_{C,S}:\{B'\in\cB:C\setminus B'=S\}\longrightarrow 2^{[n]\setminus C},
 \qquad \psi_{C,S}(B')=B'\setminus C.
\]
Let
\begin{equation}\label{eq:filters}
 \cA_{C,S,U}=\phi_{C,S,U}(\{A'\in\cA:A'\cap S=U\}),\qquad
 \cB_{C,S}=\psi_{C,S}(\{B'\in\cB:C\setminus B'=S\}).
\end{equation}
Then $\phi_{C,S,U}$ and $\psi_{C,S}$ are injective for every $U\subseteq S$. Moreover, there exists $U\subseteq S$ such that $\cA_{C,S,U}$ and $\cB_{C,S}$ are nonempty, form a cancellative pair, and
\begin{equation}\label{eq:largestatom}
 |\cA_{C,S,U}|\ge |\cA|\,2^{-\sum_{i\in S}h(a_i)}.
\end{equation}
\end{proposition}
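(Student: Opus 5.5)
We check the three claims in turn: injectivity, cancellativity of the pair, and existence of a large atom $U$. Throughout, $S=C\setminus B_0$ and $B_0\in\cB$. Every $B'\in\cB$ with $C\setminus B'=S$ satisfies $B'\cap C=C\setminus S$. Also, $\{B'\in\cB:C\setminus B'=S\}$ contains $B_0$, so $\cB_{C,S}$ is nonempty.

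\emph{Injectivity of $\psi_{C,S}$.} Suppose $B',B''$ lie in the domain and $B'\setminus C=B''\setminus C$. Both have the same trace $C\setminus S$ on $C$, so $B'=B''$. This uses nothing about recovery.

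\emph{Injectivity of $\phi_{C,S,U}$.} Let $A',A''\in\cA$ with $A'\cap S=A''\cap S=U$ and $A'\setminus C=A''\setminus C$. We compare $A'\setminus B_0$ and $A''\setminus B_0$. Since $C\setminus B_0=S$, we have
\[
A'\setminus B_0=(A'\cap S)\cup\bigl((A'\setminus C)\setminus B_0\bigr)=U\cup\bigl((A'\setminus C)\setminus B_0\bigr),
\]
and the same identity holds for $A''$. Hence $A'\setminus B_0=A''\setminus B_0$, and the recovering property \eqref{eq:recovery} (with $B=B'=B_0$) gives $A'=A''$.

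\emph{Cancellativity.} Take $X=A'\setminus C\in\cA_{C,S,U}$ and $Y=B'\setminus C\in\cB_{C,S}$. Then $A'$ is determined by $U$, $X$ and its trace on $C\setminus S$, while $B'=Y\cup(C\setminus S)$. Compute
\[
A'\setminus B'=U\cup(X\setminus Y),\qquad B'\setminus A'=(Y\setminus X)\cup\bigl((C\setminus S)\setminus A'\bigr).
\]

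For the first implication of \eqref{eq:cancellative}, suppose $X\setminus Y=X'\setminus Y$ with $X,X'\in\cA_{C,S,U}$. Then $A'\setminus B'=A''\setminus B'$, so recovery gives $A'=A''$ and hence $X=X'$.

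For the second implication, suppose $Y\setminus X=Y'\setminus X$. Then $B'\setminus A'=B''\setminus A'$, because the part inside $C$ depends only on $A'$. Recovery gives $B'=B''$, hence $Y=Y'$.

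The only delicate point is that the part of $A'$ on $C\setminus S$ is not fixed by $U$. This is harmless, since in each implication a single $A'$ (or a single $B'$) is used on both sides.

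\emph{Largest atom.} Partition $\cA$ according to $A'\cap S$. The random variable $A\cap S$ satisfies
\[
H(A\cap S)\le\sum_{i\in S}h(a_i)
\]
by \eqref{eq:subadditivity}. Choose $U$ maximizing $\Prob(A\cap S=U)$. Then
\[
-\log_2\Prob(A\cap S=U)=\min_V\bigl(-\log_2\Prob(A\cap S=V)\bigr)\le H(A\cap S),
\]
because the min-entropy is at most the Shannon entropy. Therefore
\[
|\{A':A'\cap S=U\}|\ge|\cA|\,2^{-\sum_{i\in S}h(a_i)},
\]
and by injectivity of $\phi_{C,S,U}$ this gives \eqref{eq:largestatom}. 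This also shows the maximizing atom is nonempty, so $\cA_{C,S,U}$ is nonempty.

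The main obstacle is the cancellativity check, specifically setting up the bookkeeping of $A'\setminus B'$ and $B'\setminus A'$ correctly inside and outside $C$. Once those two identities are written down, the rest is routine.
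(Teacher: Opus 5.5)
Your proof is correct. The paper does not carry out these verifications itself. It notes that a recovering pair is cancellative, cites \cite[Observations~3.2--3.3]{MSV} for injectivity and cancellativity and \cite[Proposition~3.4]{MSV} for the existence of $U$, and adds only the two nonemptiness remarks: $B_0$ lies in the domain of $\psi_{C,S}$, and the right-hand side of \eqref{eq:largestatom} is positive. You make both of these remarks as well.

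Your version replaces the citations with direct arguments:
\begin{itemize}
\item for injectivity of $\psi_{C,S}$, the trace identity $B'\cap C=C\setminus S$;
\item for injectivity of $\phi_{C,S,U}$, the identity $A'\setminus B_0=U\cup\bigl((A'\setminus C)\setminus B_0\bigr)$;
\item for cancellativity, the decompositions $A'\setminus B'=U\cup(X\setminus Y)$ and $B'\setminus A'=(Y\setminus X)\cup\bigl((C\setminus S)\setminus A'\bigr)$;
\item for \eqref{eq:largestatom}, choosing the mode of $A\cap S$ and using min-entropy $\le H(A\cap S)\le\sum_{i\in S}h(a_i)$.
\end{itemize}

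What your route buys is a self-contained proposition. It shows explicitly that only the cancellative half of the recovering property is used, since each application keeps one of the two sets fixed ($B_0$, then $B'$, then $A'$). It also explains why fixing $A'\cap S$ but leaving $A'\cap(C\setminus S)$ free is harmless: every $B'$ in the domain of $\psi_{C,S}$ contains $C\setminus S$. The paper's version is shorter, but it relies on the reader checking that the hypotheses of the cited observations match this setting.
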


\begin{proof}
Every recovering pair is cancellative by \eqref{eq:recovery} and \eqref{eq:cancellative}. Moreover, the set $S$ in the proposition has the required form $S=C\setminus B_0$ by construction. Thus \cite[Observation~3.2]{MSV} applies to the two maps defined above and gives the stated injectivity. With the same $C$ and $S$, \cite[Observation~3.3]{MSV} shows that whenever the two image families are nonempty they form a cancellative pair.

For the uniform random set $A$ and the marginal probabilities $a_i=\Prob(i\in A)$, \cite[Proposition~3.4]{MSV} provides a choice of $U\subseteq S$ satisfying \eqref{eq:largestatom}. The family $\cB_{C,S}$ is nonempty because $B_0$ belongs to the domain of $\psi_{C,S}$ and $\psi_{C,S}(B_0)=B_0\setminus C$. The right-hand side of \eqref{eq:largestatom} is positive, so the corresponding family $\cA_{C,S,U}$ is nonempty as well. Observation~3.3 of \cite{MSV} therefore gives cancellativity of this pair.
\end{proof}

We use the following theorem of Fang and Huang. For $m=0$, the same inequality holds because both nonempty families are then $\{\varnothing\}$.

\begin{theorem}\label{thm:FH}
Let $(\cC,\cD)$ be a cancellative pair, in the sense of \eqref{eq:cancellative}, of nonempty families of subsets of an $m$-element set, where $m\ge0$. Then
\begin{equation}\label{eq:FH}
 |\cC||\cD|\le(9/4)^m.
\end{equation}
\end{theorem}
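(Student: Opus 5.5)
The statement is a result from the literature, so I will not reprove it from scratch. Instead I will reduce it to the theorem exactly as Fang and Huang~\cite{FH} state it. They prove $|\cC||\cD|\le(9/4)^m$ for cancellative pairs of nonempty families on an $m$-element ground set, using the union formulation
\[
 C\cup D=C'\cup D\Rightarrow C=C',\qquad C\cup D=C\cup D'\Rightarrow D=D'.
\]
The plan has three short steps.

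First, I match the hypotheses. Suppose $(\cC,\cD)$ is cancellative in the sense of \eqref{eq:cancellative}. By the equivalence proved just above (for fixed $C,C',D$ one has $C\cup D=C'\cup D$ iff $C\setminus D=C'\setminus D$, and symmetrically), it is then cancellative in the union sense. The bound is invariant under relabelling the ground set, so I may identify the $m$-element set with $[m]$. For $m\ge1$ the inequality \eqref{eq:FH} is then literally the theorem of~\cite{FH}. Alternatively, Chang's independent hypercontractive proof~\cite{Chang} gives the same bound.

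Second, I treat the degenerate case $m=0$ separately, in case the cited source assumes $m\ge1$. The only subset of the empty set is $\varnothing$, so two nonempty families must both equal $\{\varnothing\}$. Hence $|\cC||\cD|=1=(9/4)^0$.

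Third, I note the role of nonemptiness. It is only needed so that the statement matches how the theorem is applied to the families of Proposition~\ref{prop:MSV}, which are nonempty by that proposition. The bound itself holds trivially if either family is empty.

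There is no real obstacle. The only point that needs care is confirming that the notion of cancellativity and the ground-set size in~\cite{FH} agree with the ones used here. The equivalence of the difference and union formulations, already verified in the text, settles this. The content of the bound itself, the sharp constant $9/4$ matching Tolhuizen's construction~\cite{Tolhuizen}, is taken from~\cite{FH}.
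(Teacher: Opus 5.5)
Your proposal is correct and takes essentially the same route as the paper. The paper also cites Fang--Huang directly, relies on the equivalence of the difference and union formulations of cancellativity checked just before the statement, and handles $m=0$ by noting that both nonempty families must be $\{\varnothing\}$.
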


Put $L=\log_2(9/4)$ and, for $x,y\in[0,1]$, define
\begin{equation}\label{eq:F}
 F(x,y)=x(1-y)h(x)+y(1-x)h(y)
       +h\bigl(x(1-y)\bigr)+h\bigl(y(1-x)\bigr).
\end{equation}

\begin{lemma}\label{lem:direct}
Let $(\cA,\cB)$ be a recovering pair of nonempty families on $[n]$, and let $A,B,T,a_i,b_i$ be as in \eqref{eq:basic-notation}. Then
\begin{equation}\label{eq:direct}
 3\log_2T\le\sum_{i=1}^n\bigl[F(a_i,b_i)+L(2-a_i-b_i)\bigr].
\end{equation}
\end{lemma}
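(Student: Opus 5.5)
The plan is to apply Proposition~\ref{prop:MSV} with \emph{random} choices $C=A$ uniform on $\cA$ and $B_0=B$ uniform on $\cB$, independent. Then $S=C\setminus B_0=Z$. We feed the resulting cancellative pair into Theorem~\ref{thm:FH} and average the logarithmic inequality over $(A,B)$. This should give one half of \eqref{eq:direct}, namely
\[
 \log_2T+\log_2|\cA|\le\Sigma_Z+\sum_i a_i(1-b_i)h(a_i)+L\sum_i(1-a_i).
\]
The symmetric argument, with the roles of $\cA$ and $\cB$ interchanged, gives the analogous bound with $\Sigma_W$, $b_i(1-a_i)h(b_i)$ and $L\sum_i(1-b_i)$. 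Adding the two and using $\log_2|\cA|+\log_2|\cB|=\log_2T$ gives exactly $3\log_2T\le\sum_i[F(a_i,b_i)+L(2-a_i-b_i)]$, by \eqref{eq:F}.

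\textbf{Step 1 (pointwise bound).} Fix $C\in\cA$ and $B_0\in\cB$, and let $U$ be as in Proposition~\ref{prop:MSV}. Both image families live in $2^{[n]\setminus C}$. Theorem~\ref{thm:FH} with \eqref{eq:largestatom} therefore gives
\[
 \log_2|\cA|-\sum_{i\in S}h(a_i)+\log_2|\cB_{C,S}|\le L\,(n-|C|).
\]
By injectivity of $\psi_{C,S}$, $|\cB_{C,S}|$ equals the number of $B'\in\cB$ with $C\setminus B'=S$. This is the number of $B'$ with $B'\cap C=B_0\cap C$.

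\textbf{Step 2 (averaging).} Take expectations with $C=A$ and $B_0=B$. First, $\E\sum_{i\in Z}h(a_i)=\sum_i a_i(1-b_i)h(a_i)$, and $\E(n-|A|)=\sum_i(1-a_i)$. For fixed $C=S'$ and uniform $B$, $\Prob(B_{S'}=v)=N_v/|\cB|$, where $N_v$ counts the members of $\cB$ with trace $v$ on $S'$. Hence
\[
 \E_B\log_2 N_{B_{S'}}=\log_2|\cB|-H(B_{S'}).
\]
Averaging over $A$ gives $\E\log_2|\cB_{A,Z}|=\log_2|\cB|-\sum_{S'}\Prob(A=S')H(B_{S'})$. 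By the recovering identity \eqref{eq:recoveryentropy}, this sum equals $H(Z)-\log_2|\cA|$, and by subadditivity $H(Z)\le\Sigma_Z$. Substituting gives
\[
 \log_2|\cA|+\log_2|\cB|-\Sigma_Z+\log_2|\cA|-\sum_i a_i(1-b_i)h(a_i)\le L\sum_i(1-a_i),
\]
which is the displayed half-bound.

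\textbf{Main obstacle.} The delicate point is the averaging in Step 2. Proposition~\ref{prop:MSV} is deterministic and only guarantees some $U$ for each $(C,B_0)$. We must use it for every pair, with $S=C\setminus B_0$ random, and it is \eqref{eq:largestatom} that makes the $U$-dependence disappear. The step that carries the argument is identifying $\E\log_2|\cB_{C,S}|$ with $\log_2|\cB|-H(B_C)$ averaged over $C$. This is what ties the construction to the recovering identity \eqref{eq:recoveryentropy}, and hence the step where recovering (rather than mere cancellativity) is used.
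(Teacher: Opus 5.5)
Your proposal is correct and follows the paper's proof: apply Proposition~\ref{prop:MSV} at $C=A$, $S=Z$, combine Theorem~\ref{thm:FH} with \eqref{eq:largestatom} so that $U$ drops out, average, use $H(Z)\le\Sigma_Z$, and then symmetrize. The only difference is bookkeeping: you derive $\E\log_2|\cB_{A,Z}|=\log_2T-H(Z)$ from \eqref{eq:recoveryentropy}, whereas the paper reads it off from $\Prob(Z=S)=|\cB_{C,S}|/T$; both rely on the same use of recovering, namely that $Z$ determines $A$.
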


\begin{proof}
Let $Z=A\setminus B$, and fix $S\subseteq[n]$ with $\Prob(Z=S)>0$. By the recovering property, $Z=S$ determines a unique set $C\in\cA$ with $A=C$. Since the event has positive probability, there is $B_0\in\cB$ such that $S=C\setminus B_0$. Choose $U\subseteq S$ as in Proposition~\ref{prop:MSV}.

Because $Z=S$ forces $A=C$, independence of $A$ and $B$ gives
\begin{align*}
 \Prob(Z=S)
 &=\Prob(A=C)\Prob(C\setminus B=S)\\
 &=\frac{1}{|\cA|}
   \frac{|\{B'\in\cB:C\setminus B'=S\}|}{|\cB|}.
\end{align*}
By Proposition~\ref{prop:MSV}, $\psi_{C,S}$ is injective, so the cardinality in the numerator is $|\cB_{C,S}|$. Thus
\begin{equation}\label{eq:atomprob}
 \Prob(Z=S)=\frac{|\cB_{C,S}|}{T}.
\end{equation}
The families $\cA_{C,S,U}$ and $\cB_{C,S}$, with $U$ chosen as in Proposition~\ref{prop:MSV}, are nonempty and form a cancellative pair on the $n-|C|$ coordinates of $[n]\setminus C$. Hence Theorem~\ref{thm:FH} gives
\begin{equation}\label{eq:FHfilter}
 |\cA_{C,S,U}|\,|\cB_{C,S}|\le(9/4)^{n-|C|}.
\end{equation}
By \eqref{eq:largestatom},
\[
 \frac1{|\cA_{C,S,U}|}
 \le \frac{2^{\sum_{i\in S}h(a_i)}}{|\cA|}.
\]
Therefore \eqref{eq:FHfilter} implies
\[
 |\cB_{C,S}|
 \le \frac{(9/4)^{n-|C|}}{|\cA|}
      2^{\sum_{i\in S}h(a_i)}.
\]
Using \eqref{eq:atomprob}, we obtain
\[
 \Prob(Z=S)
 \le \frac{(9/4)^{n-|C|}}{T|\cA|}
      2^{\sum_{i\in S}h(a_i)}.
\]
Taking negative base-$2$ logarithms gives
\[
 -\log_2\Prob(Z=S)
 \ge \log_2T+\log_2|\cA|-(n-|C|)L-\sum_{i\in S}h(a_i).
\]

We now average this inequality over $S=Z$. Since the recovering property determines $A$ from $Z$, the set denoted by $C$ for the realized value $S=Z$ is the random set $A$. Hence
\[
 \E|C|=\E|A|=\sum_i a_i.
\]
Moreover,
\[
 \E\!\left[\sum_{i\in Z}h(a_i)\right]
 =\sum_i \Prob(i\in Z)h(a_i)
 =\sum_i p_i h(a_i),
\]
because independence gives
$\Prob(i\in Z)=\Prob(i\in A,i\notin B)=a_i(1-b_i)=p_i$.
Finally,
\[
 \sum_{\substack{S\subseteq[n]\\ \Prob(Z=S)>0}}
 \Prob(Z=S)\bigl[-\log_2\Prob(Z=S)\bigr]=H(Z).
\]
Therefore
\begin{align*}
 H(Z)
 \ge \log_2T+\log_2|\cA|-
     \left(n-\sum_i a_i\right)L-
     \sum_i p_i h(a_i).
\end{align*}
Entropy subadditivity gives $H(Z)\le\sum_i h(p_i)$. Combining the upper and lower bounds for $H(Z)$ gives
\begin{equation}\label{eq:oneA}
 \log_2T+\log_2|\cA|
 \le \left(n-\sum_i a_i\right)L
     +\sum_i\bigl[p_i h(a_i)+h(p_i)\bigr].
\end{equation}
Interchanging $\cA$ and $\cB$ gives
\begin{equation}\label{eq:oneB}
 \log_2T+\log_2|\cB|
 \le \left(n-\sum_i b_i\right)L
     +\sum_i\bigl[q_i h(b_i)+h(q_i)\bigr].
\end{equation}
Adding \eqref{eq:oneA} and \eqref{eq:oneB}, the left-hand side is
\[
 2\log_2T+\log_2|\cA|+\log_2|\cB|=3\log_2T.
\]
The right-hand side is
\[
 \sum_i\Bigl[
 p_i h(a_i)+q_i h(b_i)+h(p_i)+h(q_i)+L(2-a_i-b_i)
 \Bigr].
\]
Since $p_i=a_i(1-b_i)$ and $q_i=b_i(1-a_i)$, the bracket is
$F(a_i,b_i)+L(2-a_i-b_i)$ by \eqref{eq:F}. This proves \eqref{eq:direct}.
\end{proof}

\section{Proof of the main theorem}\label{sec:proof}
We combine the estimates of Section~\ref{sec:bounds}. Subsection~\ref{subsec:marginals} shows that a counterexample with minimum ground-set size has restricted coordinate probabilities. Subsection~\ref{subsec:combine} combines the bounds and completes the proof.

Set
\begin{equation}\label{eq:mu}
 \mu=\frac{223949}{100000},\qquad
 Q(x,y)=\frac{F(x,y)+(2-x-y)L}{3},
\end{equation}
and, for every $\rho>1$, define
\begin{equation}\label{eq:domain}
 \mathcal M_\rho=
 \bigl\{(x,y)\in[0,1]^2:
 \max\{x,1-x\}\max\{y,1-y\}\le\rho^{-1}\bigr\}.
\end{equation}
Lemma~\ref{lem:direct} and Theorem~\ref{thm:W} give
\begin{equation}\label{eq:global}
 \log_2T\le\sum_iQ(a_i,b_i),\qquad
 \log_2T\le\sum_iW_{\lambda,\tau}(a_i,b_i)
 \quad(0\le\lambda\le\tau\le1).
\end{equation}

\subsection{Bounds on the coordinate marginals}\label{subsec:marginals}
\begin{lemma}\label{lem:domain}
Let $\rho>1$. Suppose that $(\cA,\cB)$ is a recovering pair of nonempty families on $[n]$ with
$T=|\cA||\cB|>\rho^n$, and suppose that $n$ is minimal among all such counterexamples to the bound $T\le\rho^n$. Let $A$ and $B$ be independent and uniform on $\cA$ and $\cB$, and put
$a_i=\Prob(i\in A)$ and $b_i=\Prob(i\in B)$. Then, for every $i\in[n]$,
\begin{equation}\label{eq:domain-strict}
 \max\{a_i,1-a_i\}\max\{b_i,1-b_i\}<\rho^{-1}.
\end{equation}
In particular, $(a_i,b_i)\in\mathcal M_\rho$ for every $i$.
\end{lemma}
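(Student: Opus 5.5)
This is a standard minimal-counterexample argument: assuming \eqref{eq:domain-strict} fails at some coordinate $i$, we build a recovering pair on $[n]\setminus\{i\}$ that is still a counterexample, contradicting the minimality of $n$. First, $n\ge1$, since for $n=0$ we have $T=1=\rho^0$. Fix $i$ and split each family according to membership of $i$:
\[
 \cA^1=\{A\in\cA:i\in A\},\quad \cA^0=\{A\in\cA:i\notin A\},
\]
and similarly $\cB^1,\cB^0$. Then $|\cA^1|=a_i|\cA|$ and $|\cB^1|=b_i|\cB|$. By symmetry (replace $a_i$ by $1-a_i$ and/or $b_i$ by $1-b_i$), it suffices to show that for every $\epsilon,\delta\in\{0,1\}$ with $\cA^\epsilon,\cB^\delta$ nonempty, the traces on $[n]\setminus\{i\}$ satisfy
\[
 |\cA^\epsilon||\cB^\delta|\le\rho^{n-1}.
\]
Taking the largest classes, with $|\cA^\epsilon|=\max\{a_i,1-a_i\}|\cA|$ and $|\cB^\delta|=\max\{b_i,1-b_i\}|\cB|$, we would then get
\[
 \max\{a_i,1-a_i\}\max\{b_i,1-b_i\}\,T\le\rho^{n-1}<\rho^{-1}T,
\]
which is exactly the strict inequality \eqref{eq:domain-strict}.

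The key step is to check that the restricted pair
\[
 \cA'=\{A\setminus\{i\}:A\in\cA^\epsilon\},\qquad \cB'=\{B\setminus\{i\}:B\in\cB^\delta\}
\]
is a recovering pair on $[n]\setminus\{i\}$ of the same sizes. The sizes agree because deleting $i$ is injective on each class, as all members of a class share the same status at $i$. For recovery, take $A,A'\in\cA^\epsilon$ and $B,B'\in\cB^\delta$ with
\[
 (A\setminus\{i\})\setminus(B\setminus\{i\})=(A'\setminus\{i\})\setminus(B'\setminus\{i\}).
\]
The membership of $i$ in $A\setminus B$ depends only on $(\epsilon,\delta)$, so it is the same for both pairs, and hence $A\setminus B=A'\setminus B'$ in $[n]$. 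The recovering property \eqref{eq:recovery} then gives $A=A'$, and therefore $A\setminus\{i\}=A'\setminus\{i\}$. The second condition in \eqref{eq:recovery} is handled symmetrically.

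Since $n-1<n$, minimality of $n$ gives $|\cA'||\cB'|\le\rho^{n-1}$, which completes the argument. I expect no real obstacle here. The one point that needs care is keeping the inequality strict, which comes from $T>\rho^n$. The final claim $(a_i,b_i)\in\mathcal M_\rho$ is then immediate from the definition \eqref{eq:domain}.
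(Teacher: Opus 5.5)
Your proof is correct and follows essentially the same route as the paper. You restrict both families by the status of coordinate $i$ and delete $i$. The $i$-th coordinates of $A\setminus B$ and $B\setminus A$ are fixed by the chosen classes, so the restricted pair is recovering, and minimality applied to the largest classes gives $\max\{a_i,1-a_i\}\max\{b_i,1-b_i\}\,T\le\rho^{n-1}<T/\rho$. The only cosmetic difference is that the paper ranges over all four choices $(s,t)$ and counts an empty restricted family as product zero, whereas you go straight to the largest classes, which are automatically nonempty.
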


\begin{proof}
The minimal $n$ is at least one, since on the empty ground set both nonempty families are $\{\varnothing\}$. Fix $i\in[n]$ and $s,t\in\{0,1\}$. Restrict $\cA$ to sets with indicator $A_i=s$ and $\cB$ to sets with indicator $B_i=t$, and then delete coordinate $i$. Deletion is injective on each restricted family because the deleted bit is fixed. Separately, the $i$-th coordinates of both differences $A\setminus B$ and $B\setminus A$ are determined by the fixed pair $(s,t)$. Hence equality of either difference after deletion implies equality of the same difference before deletion. Applying the recovering property before deletion shows that every nonempty restricted pair is recovering.

By minimality, the product of the two restricted family sizes is at most $\rho^{n-1}$; if one restricted family is empty, the product is zero. Because $A$ and $B$ are uniform, the restricted family sizes are
\[
 |\cA|\,\Prob(A_i=s)
 \qquad\text{and}\qquad
 |\cB|\,\Prob(B_i=t),
\]
respectively. Their product is therefore
$T\Prob(A_i=s)\Prob(B_i=t)$, and hence
\[
 T\Prob(A_i=s)\Prob(B_i=t)\le\rho^{n-1}<T/\rho.
\]
Divide by $T$ and maximize over $s,t$. The left-hand side becomes
$\max\{a_i,1-a_i\}\max\{b_i,1-b_i\}$, proving \eqref{eq:domain-strict}.
\end{proof}

\subsection{Combining the bounds}\label{subsec:combine}
\begin{proposition}\label{prop:criterion}
Let $\rho>1$. Let $\omega_0,\ldots,\omega_s\ge0$ satisfy
$\sum_{j=0}^s\omega_j=1$, and let
$0\le\lambda_j\le\tau_j\le1$ for $1\le j\le s$. Suppose that
\begin{equation}\label{eq:criterion}
 \omega_0Q(x,y)+\sum_{j=1}^s\omega_jW_{\lambda_j,\tau_j}(x,y)
 \le\log_2\rho
 \qquad\text{for every }(x,y)\in\mathcal M_\rho.
\end{equation}
Then every recovering pair $(\cA,\cB)$ of nonempty families on an $n$-element set satisfies
$|\cA||\cB|\le\rho^n$.
\end{proposition}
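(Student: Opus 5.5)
We argue by contradiction, using a counterexample on the smallest possible ground set. Suppose some recovering pair of nonempty families on an $n$-element set has $|\cA||\cB|>\rho^n$. Among all such counterexamples, choose one with $n$ minimal. The case $n=0$ cannot occur, because both families would then equal $\{\varnothing\}$ and $T=1=\rho^0$. So $n\ge1$. Let $A,B$ be independent and uniform on $\cA,\cB$, and write $a_i,b_i$ for the coordinate marginals as in \eqref{eq:basic-notation}.

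First, Lemma~\ref{lem:domain} applies to this minimal counterexample. It gives $(a_i,b_i)\in\mathcal M_\rho$ for every $i\in[n]$.

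Next, we form a convex combination of the global bounds \eqref{eq:global}. Lemma~\ref{lem:direct} gives $\log_2T\le\sum_iQ(a_i,b_i)$. Theorem~\ref{thm:W}, applied with each admissible pair $(\lambda_j,\tau_j)$, gives $\log_2T\le\sum_iW_{\lambda_j,\tau_j}(a_i,b_i)$. We multiply the first inequality by $\omega_0$ and the $j$-th by $\omega_j$, then add. Since $\omega_j\ge0$ and $\sum_j\omega_j=1$, the left-hand side is exactly $\log_2T$. This yields
\[
 \log_2T\le\sum_{i=1}^n\Bigl[\omega_0Q(a_i,b_i)+\sum_{j=1}^s\omega_jW_{\lambda_j,\tau_j}(a_i,b_i)\Bigr].
\]
Because every $(a_i,b_i)$ lies in $\mathcal M_\rho$, hypothesis \eqref{eq:criterion} bounds each bracket by $\log_2\rho$. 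Hence $\log_2T\le n\log_2\rho$, so $T\le\rho^n$. This contradicts the choice of the counterexample, and the proposition follows.

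There is no real obstacle; the argument is bookkeeping. One point needs care: the pointwise inequality \eqref{eq:criterion} is only assumed on $\mathcal M_\rho$, not on all of $[0,1]^2$. That is why the minimal-counterexample reduction is needed, since it is what places every $(a_i,b_i)$ in $\mathcal M_\rho$. We should also note that Theorem~\ref{thm:W} and Lemma~\ref{lem:direct} hold for arbitrary recovering pairs, so applying them to the minimal counterexample is legitimate. The boundary values $a_i,b_i\in\{0,1\}$ are covered, since $h$ is continuous on $[0,1]$.
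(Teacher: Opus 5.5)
Your proposal is correct and follows essentially the same route as the paper: take a minimal counterexample, use Lemma~\ref{lem:domain} to place every $(a_i,b_i)$ in $\mathcal M_\rho$, form the $\omega$-weighted combination of the inequalities in \eqref{eq:global}, and apply \eqref{eq:criterion} to each coordinate to reach $\log_2T\le n\log_2\rho$, a contradiction. Your separate treatment of $n=0$ and the remark on boundary marginals are harmless; the paper handles $n=0$ inside the proof of Lemma~\ref{lem:domain}, and the global bounds already hold on all of $[0,1]^2$.
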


\begin{proof}
Assume otherwise and choose a counterexample with minimum ground-set size $n$. With the notation of \eqref{eq:basic-notation}, Lemma~\ref{lem:domain} gives
$(a_i,b_i)\in\mathcal M_\rho$ for every $i$. Multiply the first inequality in \eqref{eq:global} by $\omega_0$ and, for each $1\le j\le s$, multiply the second inequality with parameters $(\lambda_j,\tau_j)$ by $\omega_j$. Because the weights sum to one, the weighted sum of the left-hand sides is still $\log_2T$. Hence
\[
 \log_2T
 \le\sum_{i=1}^n
 \left[\omega_0Q(a_i,b_i)+
 \sum_{j=1}^s\omega_jW_{\lambda_j,\tau_j}(a_i,b_i)\right].
\]
Apply \eqref{eq:criterion} to each coordinate. Then
$\log_2T\le n\log_2\rho$, contradicting $T>\rho^n$.
\end{proof}

For $\rho=\mu$ in \eqref{eq:mu}, use the following rational parameters:
\begin{center}\small
\begin{tabular}{@{}cccc@{}}\toprule
$j$ & $\lambda_j$ & $\tau_j$ & $\omega_j$\\\midrule
$0$ & --- & --- & $62269/100000$\\
$1$ & $26789/100000$ & $33329/100000$ & $31/4000$\\
$2$ & $26789/100000$ & $34121/100000$ & $269/4000$\\
$3$ & $27593/100000$ & $38331/100000$ & $2011/20000$\\
$4$ & $7/25$ & $35649/100000$ & $6931/50000$\\
$5$ & $26587/100000$ & $7457/20000$ & $3157/50000$\\\bottomrule
\end{tabular}
\end{center}
They satisfy $\omega_j>0$, $\sum_{j=0}^5\omega_j=1$, and
$0\le\lambda_j\le\tau_j\le1$. Define
\begin{equation}\label{eq:G}
 G(x,y)=\omega_0Q(x,y)+\sum_{j=1}^5\omega_jW_{\lambda_j,\tau_j}(x,y).
\end{equation}

\begin{proposition}\label{prop:pointwise}
For these parameters,
\begin{equation}\label{eq:certified}
 G(x,y)\le\log_2\mu
 \qquad\text{for every }(x,y)\in\mathcal M_\mu.
\end{equation}
\end{proposition}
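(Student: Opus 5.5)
The inequality \eqref{eq:certified} is a statement about an explicit function of two real variables on a compact region. I therefore plan a certified computer verification by interval (branch-and-bound) arithmetic over rationals, reduced to finitely many exact checks, as the introduction announces for Appendix~\ref{app:computer}.

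\emph{Reducing the domain.} First I would exploit symmetry. Both $F$ and $h(p)+h(q)$ are symmetric under $(x,y)\mapsto(y,x)$, and so is each $\widehat V_\lambda$ and each $W_{\lambda,\tau}$. Hence $G(x,y)=G(y,x)$ and it suffices to treat $x\le y$. The region $\mathcal M_\mu$ is compact. It excludes neighbourhoods of the corners, since $\max\{x,1-x\}\max\{y,1-y\}\ge 1/4$ always holds and $\mu^{-1}\approx0.4465$. It is a union of four pieces bounded by hyperbolic arcs such as $xy\le\mu^{-1}$ and $(1-x)y\le\mu^{-1}$. I would cover it by axis-parallel rational boxes. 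Boxes lying entirely outside $\mathcal M_\mu$ are discarded, which is decided exactly by checking the appropriate corner against the defining product.

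\emph{Upper bounds on a box.} On each box $[x_0,x_1]\times[y_0,y_1]$ I need a rigorous upper bound for $G$. I would write $G$ as a sum of terms whose behaviour is controlled piecewise: $h$ is increasing on $[0,1/2]$ and decreasing on $[1/2,1]$, and $p=x(1-y)$ and $q=y(1-x)$ are monotone in each variable. The kinks come from the terms $(\lambda-x)_+$, $(x-\lambda)_+$ and $(\tau-\max\{\lambda,x\})_+$. I would align the grid with the finitely many breakpoints $\lambda_j,\tau_j$, so that inside a box every weight coefficient is affine with fixed sign. Each summand is then a product of nonnegative monotone factors, up to sign, and can be bounded above by evaluating its factors at the appropriate corners. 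For $h$ I would use rational upper and lower bounds at rational points, obtained from truncated series for $\log_2$ with explicit remainder. For $L=\log_2(9/4)$ I would use a certified rational enclosure.

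The negative terms $-(x-\lambda)_+g(x,y)$ need a lower bound on $g$. Since $g=h(p)-(1-y)h(x)$, I get this by combining a lower bound on $h(p)$ with an upper bound on $h(x)$.

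\emph{Refinement and the main obstacle.} The crude corner bounds lose on the order of the box width times the Lipschitz constant. I would therefore subdivide adaptively until the bound falls below $\log_2\mu$. The main obstacle is near the points where $G$ is nearly tight. The parameters were chosen so that the maximum of $G$ on $\mathcal M_\mu$ is extremely close to $\log_2\mu$, presumably at several points, including points on the boundary curves of $\mathcal M_\mu$. At those points the naive interval method needs very fine boxes.

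To handle this I would first locate the near-maximizers numerically. Around each one I would use a second-order bound instead of corner evaluation: a Taylor expansion with a certified bound on second derivatives. The derivatives of $h$ are explicit, namely $h''(t)=-1/(t(1-t)\ln 2)$. The derivatives are unbounded only near $t\in\{0,1\}$. Those points matter only at the edges $x,y\in\{0,1\}$ of the domain, where I would use the monotonicity and concavity of $h$ instead.

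Finally, I would record the list of boxes and the rational certificates as the ancillary files, so that every check is an exact rational inequality.
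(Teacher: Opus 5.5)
Your plan is correct, but it produces the bound on each rectangle differently from the paper. What you share with the paper is the overall shape: a rational-arithmetic subdivision, a grid aligned with the breakpoints $\lambda_j,\tau_j$, and an exact test that discards rectangles disjoint from $\mathcal M_\mu$.

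\emph{Your route.} You use generic two-sided interval arithmetic. The terms $-(x-\lambda_j)_+g(x,y)$ then force you to bound $h(p)$ from below as well as from above. Near the tight points you add a second-order Taylor argument with certified second-derivative bounds.

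\emph{The paper's route.} It substitutes $g(x,y)=h(p)-(1-y)h(x)$ and collects terms, writing
\[
G=C_xh(x)+C_yh(y)+C_ph(p)+C_qh(q)+\tfrac{\omega_0}{3}(2-x-y)L .
\]
Put $d_j=1+2\tau_j-\lambda_j$ and $\Delta_j=\tau_j-\lambda_j$. The net coefficient of $h(p)$ contributed by $W_{\lambda_j,\tau_j}$ is $(1-(x-\lambda_j)_++\Delta_j)/d_j\ge 0$, and all other coefficients are visibly nonnegative. So only upper bounds on $h$ are ever needed, and concavity supplies them. Replacing each $h$ by its rationally rounded tangent line at the rectangle's midpoint gives a global majorant that is already second-order accurate. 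The check on each rectangle then becomes negativity of an explicit rational polynomial of bidegree at most $(2,2)$.

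\emph{What each buys.} The paper's decomposition avoids the dependency loss your corner evaluations incur. That loss arises because $h(p)$ is enclosed separately in the positive term and inside $g$. It costs $O(w)$ on a box of width $w$, which forces very fine boxes near the maximizers on the boundary of $\mathcal M_\mu$. The decomposition also needs no derivative bounds at all. Your approach, in exchange, ignores the structure of $G$ and would still work without this sign pattern.

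\emph{A side remark.} Your worry about the edges $x,y\in\{0,1\}$ is unnecessary. Since $\max\{y,1-y\}\ge\tfrac12$, every point of $\mathcal M_\mu$ has $\max\{x,1-x\}\le 2/\mu<0.9$, and likewise for $y$. So $\mathcal M_\mu$ avoids a neighbourhood of the whole boundary of the square, not just of its corners. It follows that $x$, $y$, $p$ and $q$ all stay bounded away from $0$ and $1$ there.
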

The proof of Proposition~\ref{prop:pointwise} is given in Appendix~\ref{app:computer}.

\begin{proof}[Proof of Theorem~\ref{thm:main}]
Apply Proposition~\ref{prop:criterion} with $\rho=\mu$, the six weights and the five parameter pairs specified above. The table gives positive weights summing to one and parameters satisfying $0\le\lambda_j\le\tau_j\le1$, and Proposition~\ref{prop:pointwise} gives \eqref{eq:criterion}. Therefore
$|\cA||\cB|\le\mu^n=(223949/100000)^n$.
\end{proof}

\begin{remark}\label{rem:limitation}
Proposition~\ref{prop:criterion} uses finite convex combinations of the inequalities in \eqref{eq:global}. Proposition~\ref{prop:obstruction} shows that, when $\rho=2.2394$, no such finite convex combination can satisfy \eqref{eq:criterion} on $\mathcal M_\rho$. Thus Proposition~\ref{prop:criterion} together with only the inequalities in \eqref{eq:global} cannot prove the bound $|\cA||\cB|\le 2.2394^n$. This does not rule out arguments using other consequences of the recovering property.
\end{remark}

\appendix
\section{Computer-assisted inequalities}\label{app:computer}

This appendix contains the mathematical reductions for Proposition~\ref{prop:pointwise} and Proposition~\ref{prop:obstruction}. The computational supplement describes the finite rational checks that complete these proofs and identifies the corresponding ancillary material.

\begin{proof}[Proof of Proposition~\ref{prop:pointwise}]
For $1\le j\le5$, put
\[
 d_j=1+2\tau_j-\lambda_j,
 \qquad \Delta_j=\tau_j-\lambda_j.
\]
Write $p=x(1-y)$ and $q=y(1-x)$. From \eqref{eq:F} and \eqref{eq:mu},
\[
 Q(x,y)=\frac{p}{3}h(x)+\frac{q}{3}h(y)
        +\frac13h(p)+\frac13h(q)+\frac{2-x-y}{3}L.
\]
For the $j$-th function $W_{\lambda_j,\tau_j}$, substitute
\[
 g(x,y)=h(p)-(1-y)h(x),
 \qquad
 g(y,x)=h(q)-(1-x)h(y)
\]
into \eqref{eq:Vhat}, and then substitute the result into \eqref{eq:W}. After division by
$d_j=1+2\tau_j-\lambda_j$, the coefficient of $h(p)$ is
\[
 \frac{1-(x-\lambda_j)_++\Delta_j}{d_j},
\]
and the coefficient of $h(q)$ is obtained by interchanging $x$ and $y$. The coefficient of $h(x)$ is
\[
 \frac{(\lambda_j-y)_+ +(x-\lambda_j)_+(1-y)
 +(1-y)(\tau_j-\max\{\lambda_j,y\})_+}{d_j},
\]
with the coefficient of $h(y)$ again obtained by interchanging $x$ and $y$.
Multiplying these coefficients by the weights in \eqref{eq:G} and adding the contribution of $Q$ gives
\begin{align}\label{eq:coeff-appendix}
 G(x,y)={}&C_xh(x)+C_yh(y)+C_ph(p)+C_qh(q)
 +\frac{\omega_0}{3}(2-x-y)L,\notag\\
 C_x={}&\frac{\omega_0p}{3}+\sum_{j=1}^5\frac{\omega_j}{d_j}
 \Bigl[(\lambda_j-y)_+ +(x-\lambda_j)_+(1-y)\notag\\
 &\hspace{29mm}+(1-y)(\tau_j-\max\{\lambda_j,y\})_+\Bigr],\notag\\
 C_y={}&\frac{\omega_0q}{3}+\sum_{j=1}^5\frac{\omega_j}{d_j}
 \Bigl[(\lambda_j-x)_+ +(y-\lambda_j)_+(1-x)\notag\\
 &\hspace{29mm}+(1-x)(\tau_j-\max\{\lambda_j,x\})_+\Bigr],\notag\\
 C_p={}&\frac{\omega_0}{3}+\sum_{j=1}^5\frac{\omega_j}{d_j}
 \bigl[1-(x-\lambda_j)_++\Delta_j\bigr],\notag\\
 C_q={}&\frac{\omega_0}{3}+\sum_{j=1}^5\frac{\omega_j}{d_j}
 \bigl[1-(y-\lambda_j)_++\Delta_j\bigr].
\end{align}
Every coefficient in \eqref{eq:coeff-appendix} is nonnegative on $[0,1]^2$. The terms defining $C_x$ and $C_y$ are individually nonnegative. For $C_p$ and $C_q$, use
$0\le(x-\lambda_j)_+,(y-\lambda_j)_+\le1$ and $\Delta_j\ge0$.

For $t\in(0,1)$,
\[
 h'(t)=\log_2\frac{1-t}{t},
 \qquad
 h(t)-t h'(t)=-\log_2(1-t).
\]
Since $h$ is concave, its tangent at $t$ is a global upper bound. Thus, for every rational $t\in(0,1)$,
\begin{equation}\label{eq:tangent-appendix}
 h(z)\le-\log_2(1-t)+z\log_2\frac{1-t}{t}
 \qquad(0\le z\le1).
\end{equation}
Let $R=[a,b]\times[c,e]$ be a nondegenerate rational rectangle arising in the subdivision. Its midpoint has rational coordinates $x_0=(a+b)/2$ and $y_0=(c+e)/2$, with $0<x_0,y_0<1$. Hence the four tangent points
\[
 t_x=x_0,\qquad t_y=y_0,\qquad
 t_p=x_0(1-y_0),\qquad t_q=y_0(1-x_0)
\]
are rational numbers in $(0,1)$. For any such tangent point $t$, let
\[
 \underline{\ell}(t)\le \log_2(1-t),
 \qquad
 \overline{s}(t)\ge \log_2\frac{1-t}{t}
\]
be rational bounds used in the finite verification, and define the rational affine function
\[
 \overline h_t(z)=-\underline{\ell}(t)+\overline{s}(t)z.
\]
By \eqref{eq:tangent-appendix}, $h(z)\le\overline h_t(z)$ for $0\le z\le1$: the intercept is increased because $-\underline{\ell}(t)\ge-\log_2(1-t)$, and the slope is increased while $z\ge0$. The coefficients in \eqref{eq:coeff-appendix} are nonnegative, so replacing $h(x),h(y),h(p),h(q)$ by the corresponding rational affine majorants preserves an upper bound for $G$. We also replace $L=\log_2(9/4)$ by a rational upper bound and $\log_2\mu$ by a rational lower bound. These replacements preserve an upper bound for $G-\log_2\mu$ because $\omega_0(2-x-y)/3\ge0$ on $[0,1]^2$, while the lower bound for $\log_2\mu$ is subtracted. Denote by $P_R(x,y)$ the resulting rational polynomial upper bound for $G(x,y)-\log_2\mu$.

The initial partition of each coordinate uses $0$, $1/2$, $1$, and all distinct values among the $\lambda_j$ and $\tau_j$. On each resulting rectangle, and on every rectangle obtained from it by bisection, every positive-part expression in \eqref{eq:coeff-appendix} agrees with a fixed affine function. Since $p=x(1-y)$ and $q=y(1-x)$ are bilinear, the upper bound obtained after replacing the entropy terms by their affine bounds is a polynomial of bidegree at most $(2,2)$.

To decide whether a rectangle can intersect $\mathcal M_\mu$, note that the initial partition contains $1/2$ in both coordinates and every later rectangle is obtained by bisection. Hence each rectangle $R=[a,b]\times[c,e]$ lies in one of the four regions determined by $x\le1/2$ or $x\ge1/2$ and $y\le1/2$ or $y\ge1/2$. On these four regions the minimum of
$\max\{x,1-x\}\max\{y,1-y\}$ on $R$ is, respectively,
\begin{equation}\label{eq:exclude-appendix}
 (1-b)(1-e),\qquad (1-b)c,\qquad a(1-e),\qquad ac.
\end{equation}
Thus $R$ is disjoint from $\mathcal M_\mu$ whenever the corresponding quantity in \eqref{eq:exclude-appendix} is strictly larger than $1/\mu$; equality is not sufficient for exclusion.

The finite verification described in the computational supplement checks by rational arithmetic that the finite family of rectangles covers $[0,1]^2$ and that every rectangle $R$ in that family satisfies one of the following two alternatives:
\[
 R\cap\mathcal M_\mu=\varnothing,
 \qquad\text{or}\qquad
 P_R(x,y)<0\quad\text{for every }(x,y)\in R.
\]
The first alternative is checked using \eqref{eq:exclude-appendix}. If $(x,y)\in\mathcal M_\mu$, the rectangle containing $(x,y)$ must therefore satisfy the second alternative, and by the definition of $P_R$,
\[
 G(x,y)-\log_2\mu\le P_R(x,y)<0.
\]
This proves Proposition~\ref{prop:pointwise}.
\end{proof}

Lemma~\ref{lem:param-reduction} reduces the minimization in Proposition~\ref{prop:obstruction} to finitely many parameter pairs.

\begin{lemma}\label{lem:param-reduction}
Let $\nu$ be a finitely supported probability measure on $[0,1]^2$, and put
\[
 S_\nu=\{0,1\}\cup\{x,y:(x,y)\in\operatorname{supp}\nu\}.
\]
Then the minimum of
$\int W_{\lambda,\tau}\,d\nu$ over $0\le\lambda\le\tau\le1$ is attained at a pair
$(\lambda,\tau)$ with $\lambda,\tau\in S_\nu$.
\end{lemma}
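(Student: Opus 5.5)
The plan is to show that, after multiplying out the normalising factor, $\int W_{\lambda,\tau}\,d\nu$ is a ratio of two functions of $(\lambda,\tau)$ that are affine on the cells of a polygonal subdivision of the parameter triangle $\Delta=\{0\le\lambda\le\tau\le1\}$. The subdivision comes from the grid of lines $\lambda=s$, $\tau=s$ with $s\in S_\nu$. A linear-fractional function with positive denominator attains its minimum over a convex polygon at a vertex, and every vertex of the subdivision will have both coordinates in $S_\nu$.

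\emph{Step 1 (piecewise affinity of the numerator).} Write $D(\lambda,\tau)=1+2\tau-\lambda\ge1>0$ and $N(\lambda,\tau)=\int D\,W_{\lambda,\tau}\,d\nu$, which is a finite sum over $\operatorname{supp}\nu$ of the right-hand side of \eqref{eq:W}. For fixed $(x,y)$ I would inspect each term of \eqref{eq:W}, with $(1+\lambda)\widehat V_\lambda$ expanded via \eqref{eq:Vhat}.
\begin{itemize}
\item The terms $h(p)+h(q)$ and $(\tau-\lambda)[h(p)+h(q)]$ are affine in $(\lambda,\tau)$.
\item The terms $(\lambda-x)_+h(y)$, $(\lambda-y)_+h(x)$, $(x-\lambda)_+g(x,y)$ and $(y-\lambda)_+g(y,x)$ are affine in $\lambda$ on each side of $\lambda=x$ or $\lambda=y$.
\item On $\Delta$, the term $(\tau-\max\{\lambda,x\})_+$ equals $\tau-\lambda$ when $\lambda\ge x$ (since $\tau\ge\lambda$). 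It equals $(\tau-x)_+$ when $\lambda\le x$, which is affine on each side of $\tau=x$. The same holds with $y$ in place of $x$.
\end{itemize}
Hence each summand, and therefore $N$, is affine on every closed cell $P_{k,l}=\Delta\cap([s_k,s_{k+1}]\times[s_l,s_{l+1}])$. Here $0=s_0<s_1<\dots<s_m=1$ lists $S_\nu$.

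\emph{Step 2 (shape of the cells).} Each nonempty $P_{k,l}$ is a convex polygon whose vertices lie in $S_\nu^2$. If $k<l$, the cell is the whole rectangle, since $\lambda\le s_{k+1}\le s_l\le\tau$. If $k=l$, it is the triangle with vertices $(s_k,s_k)$, $(s_k,s_{k+1})$, $(s_{k+1},s_{k+1})$, because the diagonal $\lambda=\tau$ meets the grid only at points $(s,s)$. If $k>l$, the cell is empty or degenerates to a single diagonal point. The cells cover $\Delta$, and the boundary lines $\lambda=0$ and $\tau=1$ are grid lines because $0,1\in S_\nu$.

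\emph{Step 3 (vertex principle).} On a cell, $\int W_{\lambda,\tau}\,d\nu=N/D$ with $N,D$ affine and $D>0$. For any value $c$, the inequality $N/D\le c$ is equivalent to $N-cD\le0$, which defines a half-plane. So the sublevel sets are convex and the superlevel sets are convex, i.e.\ the function is quasi-linear. Given any point $z$ of the polygon, write it as a convex combination of vertices. If every vertex had value $>N(z)/D(z)$, then $N-cD$ with $c=N(z)/D(z)$ would be positive at all vertices and hence positive at $z$, which is a contradiction. So some vertex has value at most that at $z$. Continuity on the compact set $\Delta$ (as $h$ is continuous and $\nu$ is finitely supported) gives a global minimiser. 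That minimiser lies in some cell, so a vertex of that cell is also a minimiser, and by Step 2 it lies in $S_\nu^2$.

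The main care point is Step 1: verifying that, for the term $(\tau-\max\{\lambda,x\})_+$, no breakpoint line other than $\lambda=x$ and $\tau=x$ is needed on $\Delta$. This is exactly where the constraint $\lambda\le\tau$ is used, so that the potential kink along $\tau=\lambda$ lies only on the boundary of $\Delta$.
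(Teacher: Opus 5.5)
Your proof is correct and follows essentially the same route as the paper. In both, the numerator of $\int W_{\lambda,\tau}\,d\nu$ is affine on the cells cut out by the lines $\lambda=s$, $\tau=s$ ($s\in S_\nu$) together with the diagonal, the denominator $1+2\tau-\lambda$ is positive, and a linear-fractional function on a convex polygon is minimised at a vertex. The differences are cosmetic: you treat $(\tau-\max\{\lambda,x\})_+$ by cases on $\Delta$ rather than via the identity $(\tau-x)_+-(\lambda-x)_+$, and you prove the vertex principle by a convex-combination argument applied to $N-cD$ rather than by monotonicity of the quotient along segments.
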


\begin{proof}
For $0\le\lambda\le\tau\le1$ and every $x\in[0,1]$,
\[
 (\tau-\max\{\lambda,x\})_+
 =(\tau-x)_+-(\lambda-x)_+.
\]
After this substitution in \eqref{eq:W}, the numerator of
$\int W_{\lambda,\tau}\,d\nu$ is affine in $(\lambda,\tau)$ on each cell cut out by the lines
$\lambda=z$, $\tau=z$ for $z\in S_\nu$, together with $\lambda=\tau$. The denominator
$1+2\tau-\lambda$ is positive throughout the parameter triangle. Parameterize a line segment contained in one cell by $s\in[0,1]$. Along the segment the quotient has the form
\[
 \frac{\alpha s+\beta}{\gamma s+\delta},
 \qquad \gamma s+\delta>0.
\]
Its derivative is
$ (\alpha\delta-\beta\gamma)/(\gamma s+\delta)^2$, so the quotient is monotone or constant on the segment. Given a point of the polygonal cell, choose a line through it whose intersection with the cell is a segment with endpoints on the boundary. Monotonicity on that segment shows that one boundary endpoint has value no larger than the original point. Each boundary endpoint lies on an edge, and monotonicity along that edge shows that one of its vertices has value no larger. Hence the minimum over the cell is attained at a vertex. Every vertex has both coordinates in $S_\nu$; this includes vertices on the diagonal because $0,1\in S_\nu$. Taking the minimum over the finitely many cells proves the claim.
\end{proof}

\begin{proposition}\label{prop:obstruction}
Put $\mu_-=11197/5000=2.2394$ and $m_-=\log_2\mu_-$. There exists a symmetric rational probability measure $\nu$ supported strictly inside $\mathcal M_{\mu_-}$ such that
\begin{equation}\label{eq:obstruction}
 \int Q\,d\nu>m_-+10^{-5},\qquad
 \int W_{\lambda,\tau}\,d\nu>m_-+10^{-5}
 \quad\text{for every }0\le\lambda\le\tau\le1.
\end{equation}
Consequently no finite convex combination of $Q$ and the functions
$W_{\lambda,\tau}$ can be bounded above by $m_-$ on all of $\mathcal M_{\mu_-}$.
\end{proposition}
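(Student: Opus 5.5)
The plan is to exhibit an explicit measure and certify it with exact rational arithmetic, mirroring the proof of Proposition~\ref{prop:pointwise}. First, find $\nu$ numerically. Solve the LP dual of the convex-combination problem on a fine grid of $\mathcal M_{\mu_-}$ together with a grid of parameters $(\lambda,\tau)$: maximize $t$ subject to $\int Q\,d\nu\ge t$ and $\int W_{\lambda,\tau}\,d\nu\ge t$. The optimal dual solution concentrates on a few points. Next, symmetrize the measure under $(x,y)\mapsto(y,x)$; this is legitimate because $Q$ is symmetric and $W_{\lambda,\tau}(y,x)=W_{\lambda,\tau}(x,y)$ by \eqref{eq:Vhat} and \eqref{eq:W}. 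Then round the support points and weights to rationals, keeping each support point at positive distance inside $\mathcal M_{\mu_-}$. Checking $\max\{x,1-x\}\max\{y,1-y\}<\mu_-^{-1}$ is a single rational comparison, since $\mu_-$ is rational.

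The "consequently" part is formal. Suppose $\omega_0Q+\sum_j\omega_jW_{\lambda_j,\tau_j}\le m_-$ on $\mathcal M_{\mu_-}$. Integrate against $\nu$, which is supported in $\mathcal M_{\mu_-}$. By \eqref{eq:obstruction}, each integral exceeds $m_-+10^{-5}$, and the weights sum to one. So the convex combination integrates to more than $m_-$, which is a contradiction.

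To certify \eqref{eq:obstruction} we need rigorous lower bounds on the integrals. Since $\nu$ is finitely supported, $\int Q\,d\nu$ is a finite sum of terms $h(\cdot)$ and $L$ evaluated at rational points. Bound each $h(r)$ from below by interval evaluation of $-r\log_2 r-(1-r)\log_2(1-r)$ with rational enclosures of the logarithms. The coefficients of $h$ in $Q$ are all nonnegative.

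For $W_{\lambda,\tau}$ the difficulty is the continuum of parameters, and Lemma~\ref{lem:param-reduction} handles it. Since $\int W_{\lambda,\tau}\,d\nu$ is minimized at a pair with $\lambda,\tau\in S_\nu$, it suffices to check the finitely many pairs $\lambda\le\tau$ drawn from $S_\nu$. For each such pair, $(1+2\tau-\lambda)\int W_{\lambda,\tau}\,d\nu$ is an explicit combination of $h(x),h(y),h(p),h(q)$ at the support points. In general, though, some coefficients are negative: the terms $-(x-\lambda)_+g(x,y)$ produce negative coefficients of $h(p)$ once $x-\lambda>1+\tau-\lambda$, which never happens, but $g$ also appears with mixed signs. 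Therefore I would not rely on sign structure. Instead I would use two-sided rational enclosures of every $h$ value, valid to about $10^{-12}$, and propagate them through interval arithmetic. This gives a certified lower bound for each of the finitely many integrals, which is then compared with a rational upper bound for $m_-+10^{-5}$.

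The main obstacle is the first step: producing a $\nu$ with margin $10^{-5}$ uniformly over all $(\lambda,\tau)$. The infimum over parameters is nonsmooth, and the true obstruction level is only slightly above $\log_2 2.2394$. I would iterate a cutting-plane procedure. Given the current $\nu$, enumerate $S_\nu$ pairs to find the worst $(\lambda,\tau)$, add that $W$ as a constraint, and re-solve the LP over measures on a refined support grid. The support should be kept small so that $|S_\nu|^2$ exact checks remain feasible.
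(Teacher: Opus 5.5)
Your outline follows the paper's proof step for step. Both use an explicit symmetric, finitely supported rational $\nu$ lying strictly inside $\mathcal M_{\mu_-}$, checked by a rational comparison. Both apply Lemma~\ref{lem:param-reduction} to replace the continuum of $(\lambda,\tau)$ by the finitely many pairs $\lambda\le\tau$ in $S_\nu$. Both then evaluate finitely many sums rigorously in rational arithmetic, and use the same integration argument for the ``consequently'' part.

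The gap is that you never produce $\nu$. What you give is a search procedure, and nothing in the proposal shows that it succeeds at the level $\mu_-=2.2394$ with margin $10^{-5}$. That existence is the entire nontrivial content of the proposition, and you note yourself that the true obstruction level is only slightly above $m_-$, so success is not automatic. The paper closes this by exhibiting the witness: four rational points $(x_i,y_i)$ and their mirror images, with rational masses $v_i/2$. This gives $|S_\nu|=10$, so exactly $55$ parameter pairs must be checked, and the remaining arithmetic is delegated to the computational supplement.

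A smaller point: your paragraph on signs is mistaken, though harmless. Substitute $g(x,y)=h(p)-(1-y)h(x)$ and $g(y,x)=h(q)-(1-x)h(y)$, as in the proof of Proposition~\ref{prop:pointwise}. Then every coefficient of $h(x),h(y),h(p),h(q)$ in $(1+2\tau-\lambda)W_{\lambda,\tau}$ is nonnegative on $[0,1]^2$. The coefficient of $h(p)$ is $1-(x-\lambda)_++\tau-\lambda\ge0$. The coefficient of $h(x)$ is $(\lambda-y)_++(x-\lambda)_+(1-y)+(1-y)(\tau-\max\{\lambda,y\})_+$. So one-sided lower enclosures of $h$ already suffice, exactly as for $Q$. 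Two-sided interval arithmetic is valid but unnecessary.
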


\begin{proof}
Let $\delta_z$ denote unit mass at $z$ and define
\[
 \nu=\frac12\sum_{i=1}^4v_i
 \bigl(\delta_{(x_i,y_i)}+\delta_{(y_i,x_i)}\bigr),
\]
where
\begin{center}\small
\begin{tabular}{@{}cccc@{}}\toprule
$i$&$x_i$&$y_i$&$v_i$\\\midrule
$1$&$2619177/10000000$&$3949887/10000000$&$142569/500000$\\
$2$&$330333/1250000$&$3930573/10000000$&$1499169/10000000$\\
$3$&$33003/100000$&$833701/2500000$&$2086751/10000000$\\
$4$&$57/200$&$3932489/10000000$&$35627/100000$\\\bottomrule
\end{tabular}
\end{center}
The weights sum to one. Both $Q$ and $W_{\lambda,\tau}$ are symmetric in their two arguments: this follows directly from \eqref{eq:F}, \eqref{eq:mu}, \eqref{eq:Vhat}, and \eqref{eq:W} by interchanging $x$ and $y$. The finite verification described in the computational supplement uses all eight support points in the table, each with mass $v_i/2$, and checks by rational comparison that every support point lies strictly inside $\mathcal M_{\mu_-}$.

For this measure, $|S_\nu|=10$, so Lemma~\ref{lem:param-reduction} reduces the second inequality in \eqref{eq:obstruction} to the $55$ ordered pairs $\lambda,\tau\in S_\nu$ with $\lambda\le\tau$. The finite verification also establishes by rational arithmetic that the first inequality in \eqref{eq:obstruction} and the second inequality for each of these $55$ pairs hold with the margin stated in \eqref{eq:obstruction}. Lemma~\ref{lem:param-reduction} then gives the second inequality for every $0\le\lambda\le\tau\le1$.

Suppose that a finite convex combination
\[
 R(x,y)=\alpha_0Q(x,y)+\sum_{j=1}^r\alpha_jW_{\lambda_j,\tau_j}(x,y),
 \qquad \alpha_j\ge0,\quad \sum_{j=0}^r\alpha_j=1,
\]
satisfied $R(x,y)\le m_-$ for every $(x,y)\in\mathcal M_{\mu_-}$. Since $\nu$ is supported in $\mathcal M_{\mu_-}$, integration would give $\int R\,d\nu\le m_-$. On the other hand, \eqref{eq:obstruction} gives
\[
 \int R\,d\nu
 =\alpha_0\int Q\,d\nu
  +\sum_{j=1}^r\alpha_j\int W_{\lambda_j,\tau_j}\,d\nu
 >m_-+10^{-5},
\]
a contradiction.
\end{proof}

\section*{Acknowledgment of AI assistance}
The overall research direction and the strategy of combining complementary estimates were formulated by the author. ChatGPT (OpenAI; GPT-6 Astra and GPT-5.6 Sol) was used to produce initial versions of all proofs retained in the paper and all code used in the computer-assisted parts, as well as to perform exploratory calculations and assist with drafting. The author independently verified every mathematical argument and every computer-assisted claim retained in the final manuscript, revised the exposition, and assumes full responsibility for the correctness and content of the paper.


\begin{thebibliography}{99}
\bibitem{AS} R.~Ahlswede and G.~Simonyi, \emph{On the optimal structure of recovering set pairs in lattices: the sandglass conjecture}, Discrete Math. \textbf{128} (1994), 389--394. doi:10.1016/0012-365X(94)90129-5.
\bibitem{Chang} F.~Chang, \emph{A hypercontractive proof of the sharp bound for cancellative pairs}, arXiv:2609.09105 (2026).
\bibitem{DFCDhashing} S.~Della Fiore, S.~Costa and M.~Dalai, \emph{Improved bounds for $(b,k)$-hashing}, IEEE Trans. Inform. Theory \textbf{68} (2022), no.~8, 4983--4997. doi:10.1109/TIT.2022.3167608.
\bibitem{FH} Y.~Fang and H.~Huang, \emph{On cancellative pairs of families of subsets}, arXiv:2609.01483v2 (2026).
\bibitem{HK} R.~Holzman and J.~K\"orner, \emph{Cancellative pairs of families of sets}, European J. Combin. \textbf{16} (1995), 263--266. doi:10.1016/0195-6698(95)90031-4.
\bibitem{Janzer} B.~Janzer, \emph{A new upper bound for cancellative pairs}, Electron. J. Combin. \textbf{25} (2018), \#P2.13. doi:10.37236/7210.
\bibitem{MSV} A.~Mond, V.~Souza and L.~Versteegen, \emph{The sandglass conjecture beyond cancellative pairs}, arXiv:2508.21819v1 (2025).
\bibitem{NY} C.~Nair and M.~Yazdanpanah, \emph{On the AND--OR interference channel and the sandglass conjecture}, in: 2020 IEEE International Symposium on Information Theory (ISIT), 1540--1545. doi:10.1109/ISIT44484.2020.9174073.
\bibitem{Soltesz} D.~Solt\'esz, \emph{New bounds on Simonyi's conjecture}, European J. Combin. \textbf{70} (2018), 251--267. doi:10.1016/j.ejc.2018.01.005.
\bibitem{Tolhuizen} L.~M.~Tolhuizen, \emph{New rate pairs in the zero-error capacity region of the binary multiplying channel without feedback}, IEEE Trans. Inform. Theory \textbf{46} (2000), 1043--1046. doi:10.1109/18.841182.
\end{thebibliography}
\end{document}